\newcommand{\restrict}{\mathord{\upharpoonright}}           
\newtheorem{theorem}{Theorem}[section]
\newtheorem{lemma}[theorem]{Lemma}
\newtheorem{proposition}[theorem]{Proposition}
\theoremstyle{definition}
\newtheorem{definition}[theorem]{Definition}
\newtheorem{remark}[theorem]{Remark}
\newcommand{\nm}[1]{\vert\kern-0.25ex\vert #1 \vert\kern-0.25ex\vert}
\newcommand{\R}{\mathbb{R}}
\newcommand{\N}{\mathbb{N}}
\newcommand{\diam}{\textnormal{diam}}
\newcommand{\free}[1]{\mathcal{F}(#1)}
\newcommand{\Lipo}[1]{\textnormal{Lip}_0(#1)}
\newcommand{\tildem}{\accentset{\sim}}
\newcommand{\cl}{\textnormal{cl}}
\newcommand{\supp}{\textnormal{supp}}
\newcommand{\trker}{\textnormal{trker\hspace{1.5 pt}}}
\newcommand{\cM}{\mathcal{M}}
\newcommand{\cB}{\mathcal{B}}
\newcommand{\cP}{\mathcal{P}}
\newcommand{\cA}{\mathcal{A}}
\newcommand{\cS}{\mathcal{S}}
\newcommand{\cU}{\mathcal{U}}
\newcommand{\cV}{\mathcal{V}}
\newcommand{\cW}{\mathcal{W}}
\newcommand{\cR}{\mathcal{R}}
\DeclareMathOperator{\Lip}{Lip}
\DeclareMathOperator{\inter}{int}
\numberwithin{equation}{section}
\title[Free spaces over strongly countable-dimensional spaces and APs]{Lipschitz-free spaces over strongly countable-dimensional spaces and approximation properties}
\author[F. Talimdjioski]{Filip Talimdjioski}
\address[F. Talimdjioski]{School of Mathematics and Statistics, University College Dublin, Belfield, Dublin 4, Ireland}
\email{filip.talimdjioski@ucdconnect.ie}
\date{\today}
\begin{document}

\begin{abstract}
	Let $T$ be a compact, metrisable and strongly countable-dimensional topological space. Let $\cM^T$ be the set of all metrics $d$ on $T$ compatible with its topology, and equip $\cM^T$ with the topology of uniform convergence, where the metrics are regarded as functions on $T^2$. We prove that the set $\cA^{T,1}$ of metrics $d\in\cM^T$ for which the Lipschitz-free space $\free{T,d}$ has the metric approximation property is residual in $\cM^T$.
\end{abstract}

\keywords{Lipschitz-free space, approximation property, covering dimension, strongly countable-dimensional space}
\subjclass[2020]{Primary 46B20, 46B28}
\maketitle

\section{Introduction}\label{sect_intro}

For a metric space $(M,d)$ and a base point $x_0\in M$ we define the Banach space $\Lipo{M,x_0}$ consisting of all real-valued Lipschitz functions $f$ on $M$ that vanish at $x_0$, equipped with the norm $$\nm{f} := \sup_{x,y\in M, x\not= y} \frac{|f(x)-f(y)|}{d(x,y)}.$$ For any $x\in M$ we define the bounded linear functional $\delta_x\in \Lipo{M,x_0}^*$ by $\delta_x(f) = f(x)$, $f\in\Lipo{M,x_0}$. The closed linear span of the set $\{\delta_x : x\in M\}$ is called the \emph{Lipschitz-free space} $\free{M,x_0}$. It is well-known that $\Lipo{M,x_0}$ is isometric to the dual space of $\free{M,x_0}$ and that the isometric structure of both spaces does not depend on the choice of the base point $x_0$. We will hereafter write $\Lipo{M}$ and $\free{M}$ without specifying the base point, and call $\free{M}$ simply the free space over $M$. In the book \cite{weaver}, Weaver provides a comprehensive introduction to Lipschitz and Lipschitz-free spaces. In it, the latter are called Arens-Eells spaces and are denoted by \AE$(M)$.

The structure of Lipschitz-free spaces is a popular and very active research topic in Banach space theory. One direction of research has been the approximation properties of free spaces. Results involving the approximation property (AP) appear in \cites{godefroyozawa, hajeklancienpernecka, kalton}; results involving the bounded approximation property (BAP) appear in \cites{godefroykalton, godefroy:15, apandschd}; and metric approximation property (MAP) results can be found in in \cites{godefroykalton, adalet, godefroyozawa, douchakaufmann, cuthdoucha,fonfw,ps:15,smithtalim}. Other related results can be found in \cite{godefroy:20, godefroyappsequences}. More details are available in the introduction to \cite{smithtalim}.

We will mention several results regarding compact and proper metric spaces (the latter property means that every closed ball is compact). It is known that if $M$ is a sufficiently `thin' totally disconnected metric space then $\free{M}$ has the MAP. For example, if $M$ is a countable proper metric space then $\free{M}$ has the MAP \cite{adalet}. Also, as a corollary of \cite{weaver}*{Corollary 4.39} and \cite{cuthdoucha}*{Corollary 16}, $\free{M}$ has the MAP when $M$ is compact and uniformly disconnected, meaning that there exists $0 < a \leq 1$ such that for every distinct $p,q\in M$, we can find complementary clopen sets $C,D \subseteq M$ such that $p\in C, q\in D$ and $d(C,D) \geq a d(p,q)$, where $d(C,D) = \inf \{d(x,y) : x\in C, y\in D\}$. On the other hand, in \cite{godefroyozawa}, G.~Godefroy and N.~Ozawa constructed a compact convex subset $C$ of a separable Banach space such that $\free{C}$ fails the AP. Also, by a result in \cite{hajeklancienpernecka}, there exists a metric space $M$ homeomorphic to the Cantor space such that $\free{M}$ fails the AP. A characterisation of the BAP on free spaces over compact metric spaces, in terms of uniformly bounded sequences of Lipschitz `almost extension' operators, is given in \cite{godefroy:15}.

In \cite{godefroysurvey}, G.~Godefroy surveys various aspects of the theory of free spaces, including the lifting property for separable Banach spaces, approximation properties of free spaces, and norm attainment of Lipschitz functions and operators. In the last section he states a number of open problems, several of which concern approximation properties of free spaces.

For a compact and metrisable topological space $T$, we denote by $\cM^T$ the set of all metrics on $T$ compatible with its topology. We consider $\cM^T$ as a subset of $C(T^2)$, where the latter is equipped with the usual (supremum) norm. By \cite[Proposition 2.1]{stprbeforepub}, $\cM^T$ is a $G_\delta$ subset of $C(T^2)$, and hence completely metrisable. Thus $\cM^T$ is a Polish space. Let us define the sets
\begin{align*}
	\cA^{T,\lambda} &= \{d\in\cM^T \; :\; \free{T,d} \text{ has the } \lambda\text{-BAP}\}, \text{ for } \lambda \in [1,\infty), \\
	\text{and}\quad\cA^T_f &= \{d\in\cM^T \; :\; \free{T,d} \text{ fails the AP} \}.
\end{align*}

In \cite{godefroysurvey}*{Problem 6.6}, Godefroy considered the set $\cM^K$, where $K=2^\N$ is the Cantor space equipped with the usual  (product) topology. He asked about the topological nature of the set $\cA_f^K$ and, in particular, whether $\cA_f^K$ is a residual set in $\cM^K$. The latter question has a negative answer because $\cA^{K,1}$ is a residual ($F_{{\sigma\delta}}$) set in $\cM^K$ \cite{talimpublished}*{Theorem 1.1}. In addition, it was shown that the necessarily meager set $\cA^K_f$ is dense in $\cM^K$ \cite{talimpublished}*{Proposition 1.2}. This direction of investigation was continued in \cite{stprbeforepub}, where the authors showed, in particular, that whenever $T$ is compact and metrisable, $\cA^{T,1}$ is dense in $\cM^T$, and if $T$ is uncountable then $\cA_f$ is also dense in $\cM^T$. Moreover, they showed that $\cA^{T,1}$ is residual in $\cM^T$ when $T$ is zero-dimensional. In this paper we generalise the latter result to the case of strongly countable-dimensional spaces (for the definition please see \Cref{def:covdim,,def:strcount}). Our main result is the following. 

\begin{theorem}\label{thm:main}
	If $T$ is compact, metrisable and strongly countable-dimensional, then $\cA^{T,1}$ is residual in $\cM^T$. Consequently, $\cA^T_f$ is meager in $\cM^T$.
\end{theorem}

We note that one can also investigate the topological nature of the sets $\cA^{T,\lambda}$ and $\cA^T_f$ in a setting where the underlying space $T$ is not compact. Indeed, in \cite{stprbeforepub} the authors considered the (completely metrisable) space of all proper metrics on a given `properly metrisable' space $T$ (equivalently, a locally compact, separable and metrisable space $T$). Here the space of proper metrics is equipped with the topology of uniform convergence. We also mention that if $T$ is a completely metrisable topological space, then the space of all complete metrics on $T$, equipped with the topology of uniform convergence, is a Baire space \cite[Theorem 1.4]{ishiki2024spaces}. Therefore this provides another potential setting for investigation in this research direction.

The rest of the paper is organised as follows. In \Cref{sect_anc} we introduce the notation, the definitions related to the covering dimension, and some preliminary results. In \Cref{sec:findim} we prove a special case of \Cref{thm:main}, namely, that $\cA^{T,1}$ is residual in $\cM^T$ when $T$ is compact, metrisable, and has finite covering dimension. This result is used in \Cref{sec:strcount}, which is devoted to the proof of \Cref{thm:main}.

\section{Notation and preliminary results}\label{sect_anc}

For a metric space $(M,d)$ and $A,B\subseteq M$ we write
\begin{align*}
	d(A,B) = \inf_{x\in A,y\in B} d(x,y), \qquad d(x,A) = d(\{x\},A), \quad\text{ and }\quad \diam_d(M) = \sup_{x,y\in M} d(x,y).
\end{align*}
If $r>0$ we write $B^d_r(A) = \{x\in M : d(x,A) < r\}$, and for $x\in M$ we write $B^d_r(x)$ for $B^d_r(\{x\})$. If there is no ambiguity about the metric $d$ we write $B_r(A)$ and $B_r(x)$. We also write $\cl(A)$ for the closure of $A$ in $(M,d)$. For a real-valued Lipschitz function $f$ on $(M,d)$, we write $\Lip_d(f)$ for the (optimal) Lipschitz constant of $f$ with respect to $d$. The Lipschitz space and free space over $(M,d)$ are denoted by $\Lipo{M,d}$ and $\free{M,d}$, respectively. If there is no ambiguity about the metric $d$ we omit it from the notation. For a bounded real-valued function $g$ defined on a set $U$, we write $\nm g _\infty = \sup_{x\in U} |g(x)|$. If $A\subseteq U$ is a subset then we write $g\restrict_A$ for the restriction of $g$ to $A$. For a subset $A\subseteq M$ and $\epsilon > 0$, we say that $A$ is \emph{$\epsilon$-dense} in $M$ if $d(x,A) \leq \epsilon$ for every $x\in M$. We call an operator $E\colon C(A)\to C(M)$ an \emph{extension operator} if $E(f)\restrict _A = f$ for all $f\in C(A)$. If $X$ is a Banach space then $B_X$ denotes the closed unit ball of $X$. 

We will first give a useful criterion, due to Godefroy, for the $\lambda$-BAP of free spaces over compact metric spaces in terms of `almost-extension' operators \cite{godefroysurvey}*{Theorem 3.2} (also \cite{godefroy:15}*{Theorem 1}).

\begin{theorem}\label{thm:godefroycriterion}
	Let $M$ be a compact metric space and $(M_n)_n$ be a sequence of finite $\epsilon_n$-dense subsets of $M$, where $(\epsilon_n)_n$ is a sequence of positive real numbers such that $\lim_{n\to\infty} \epsilon_n = 0$. Then $\free{M}$ has the $\lambda$-BAP if and only if there is a sequence $(T_n)_n$ of operators $T_n\colon\Lipo{M_n}\to\Lipo{M}$ such that $\nm{T_n} \leq \lambda$ for all $n$ and $$\lim_{n\to\infty} \sup_{f\in B_{\Lipo{M_n}}} \nm{T_n(f)|_{M_n}-f}_\infty = 0.$$
\end{theorem}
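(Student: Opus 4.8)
The plan is to prove the stated equivalence in two directions, which behave very differently. I will write $R_n\colon\Lipo M\to\Lipo{M_n}$ for the restriction map $R_n(g)=g|_{M_n}$ and $\iota_n\colon\free{M_n}\to\free M$ for the canonical isometric inclusion; recall that $R_n=\iota_n^*$, so $\nm{R_n}\le 1$.

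For sufficiency (the existence of the $T_n$ forces the $\lambda$-BAP) I would manufacture finite-rank approximants on $\free M$ directly. Since $\Lipo{M_n}$ is finite-dimensional, $R_n$ is weak*-to-norm continuous, so $\Phi_n:=T_n\circ R_n\colon\Lipo M\to\Lipo M$ is weak*-to-weak* continuous and of finite rank; hence $\Phi_n=S_n^*$ for a finite-rank operator $S_n\colon\free M\to\free M$ with $\nm{S_n}=\nm{\Phi_n}\le\nm{T_n}\le\lambda$. By the definition of the $\lambda$-BAP together with uniform boundedness and the density of the molecules, it then suffices to show $\nm{S_n\delta_x-\delta_x}_{\free M}\to 0$ for each fixed $x\in M$. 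For this I would pair against $g\in B_{\Lipo M}$, using $\langle S_n\delta_x-\delta_x,g\rangle=(T_n(g|_{M_n}))(x)-g(x)$, set $f:=g|_{M_n}\in B_{\Lipo{M_n}}$, pick $x_n\in M_n$ with $d(x,x_n)\le\epsilon_n$, and split the difference as $[(T_nf)(x)-(T_nf)(x_n)]+[(T_nf)(x_n)-f(x_n)]+[f(x_n)-g(x)]$. The first term is at most $\Lip_d(T_nf)\,\epsilon_n\le\lambda\epsilon_n$, the third equals $|g(x_n)-g(x)|\le\epsilon_n$, and the middle term is at most $\sup_{f'\in B_{\Lipo{M_n}}}\nm{T_n(f')|_{M_n}-f'}_\infty$, which vanishes by hypothesis. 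Taking the supremum over $g$ gives $\nm{S_n\delta_x-\delta_x}_{\free M}\le(\lambda+1)\epsilon_n+\sup_{f'}\nm{T_n(f')|_{M_n}-f'}_\infty\to 0$, which is the key estimate.

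For necessity ($\lambda$-BAP produces the $T_n$) my plan is to reverse this correspondence. Applying the $\lambda$-BAP to the finite-dimensional subspace $\free{M_n}\subseteq\free M$ yields finite-rank operators $S_n\colon\free M\to\free M$ with $\nm{S_n}\le\lambda$ and $\nm{S_n\iota_n-\iota_n}\to 0$ (strong convergence is uniform on the unit ball of the finite-dimensional $\free{M_n}$). The point is that if the range of $S_n$ lay inside $\free{M_n}$, then $S_n$ would factor as $S_n=\iota_n V_n$ with $V_n\colon\free M\to\free{M_n}$, $\nm{V_n}\le\lambda$, and $T_n:=V_n^*\colon\Lipo{M_n}\to\Lipo M$ would be exactly what we need: indeed $T_n(f)|_{M_n}=R_nV_n^*f=(V_n\iota_n)^*f$, and $V_n\iota_n\to I_{\free{M_n}}$ forces the almost-extension estimate. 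Thus the whole problem reduces to producing $\lambda$-bounded approximants whose ranges are carried by the prescribed nets $M_n$.

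This last point is the main obstacle. A naive fix — composing $S_n^*$ with a fixed linear extension operator $E\colon\Lipo{M_n}\to\Lipo M$, or replacing the representing functionals of $S_n$ by nearby molecules supported on $M_n$ — does yield an operator with the correct approximate-extension behaviour, but only with norm $\lambda\nm E$ or $\lambda(1+c)$; the competing McShane extension restores the norm but destroys linearity. To recover the \emph{exact} constant $\lambda$ I would first build, for each $n$ and each $m$, an operator $T_n^{(m)}\colon\Lipo{M_n}\to\Lipo M$ with $\nm{T_n^{(m)}}\le\lambda+\tfrac1m$ and extension error at most a target $\delta_n$, and then pass to a weak*-cluster point $T_n$ of the sequence $(T_n^{(m)})_m$, which lives in a weak*-compact ball of $\mathcal L(\Lipo{M_n},\free M^*)$. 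Here $\nm{T_n}\le\lambda$ by weak*-lower semicontinuity of the norm, while for each fixed $f$ the evaluations $(T_n^{(m)}f)(x)$ converge at the finitely many points $x\in M_n$, so $\nm{T_n(f)|_{M_n}-f}_\infty\le\liminf_m\nm{T_n^{(m)}(f)|_{M_n}-f}_\infty\le\delta_n$, and a standard interchange shows the same for the supremum over the compact ball $B_{\Lipo{M_n}}$. Choosing $\delta_n\to 0$ finishes the construction. I expect that reconciling the sharp constant $\lambda$ with a genuinely linear almost-extension operator adapted to the prescribed nets — rather than settling for $\lambda+\epsilon$ — is the delicate heart of the argument, and the weak*-limit step above is where I would concentrate the technical effort.
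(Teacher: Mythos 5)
First, a framing note: the paper does not prove this statement; it quotes it from Godefroy (\cite{godefroysurvey}*{Theorem 3.2}, \cite{godefroy:15}*{Theorem 1}), so your argument must stand on its own. Your sufficiency direction (the operators $T_n$ imply the $\lambda$-BAP) is correct and complete: dualising $T_n\circ R_n$ to a finite-rank $S_n$ on $\free{M}$, reducing strong convergence to molecules via uniform boundedness and density, and the three-term estimate $(\lambda+1)\epsilon_n+\eta_n$ is exactly the standard argument (modulo the harmless convention that the base point should lie in each $M_n$ so that $g\restrict_{M_n}\in\Lipo{M_n}$).

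The necessity direction, however, has a genuine gap, and the difficulty is located in the wrong place. Your plan rests on producing, for each \emph{fixed} $n$ and each $m$, operators $T_n^{(m)}\colon\Lipo{M_n}\to\Lipo{M}$ with $\nm{T_n^{(m)}}\leq\lambda+\tfrac1m$ and extension error below a \emph{pre-assigned} target $\delta_n\to 0$; the weak*-cluster-point step then only removes the $+\tfrac1m$. But these $T_n^{(m)}$ are never constructed, and neither of the fixes you mention produces them for fixed $n$. In dual form you need $V\colon\free{M}\to\free{M_n}$ with $\nm{V}\leq\lambda+\tfrac1m$ and $\max_{x\in M_n}\nm{V\delta_x-\delta_x}\leq\delta_n$. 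The BAP approximant $S_n$ you start from is chosen only to almost fix the ball of $\free{M_n}$; nothing places its range anywhere near $\free{M_n}$. There are unit vectors of $\free{M}$ at distance essentially $1$ from $\free{M_n}$ (e.g.\ normalised sums of many small, disjointly supported dipoles located strictly between net points), and the range vectors of $S_n$ may be of this kind; then ``replacing the representing functionals by nearby molecules supported on $M_n$'' costs $c\approx 1$ rather than $o(1)$, and destroys the almost-extension estimate as well as the norm bound. Likewise, a linear extension operator $E\colon\Lipo{M_n}\to\Lipo{M}$ exists for each fixed $n$, but with no uniform bound on $\nm{E}$ --- a uniform bound is essentially what the theorem is asking you to produce. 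So the quantifier order in your scheme (fixed $n$, target $\delta_n$ chosen in advance) is not realisable.

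The missing idea is to decouple the index of the approximant from the index of the net and then diagonalise. Choose one finite-rank $S$ with $\nm{S}\leq\lambda$ that almost fixes the \emph{norm-compact} set $\{\delta_x : x\in M\}$ uniformly (compactness of $M$ makes this possible with a single operator). Its range $F$ is a fixed finite-dimensional subspace, and since $\bigcup_m \operatorname{span}\{\delta_x : x\in M_m\}$ is dense in $\free{M}$, an Auerbach-basis perturbation gives, for all $m$ large \emph{relative to $S$}, a linear map $\Pi\colon F\to\free{M_m}$ with $\nm{\Pi-\mathrm{id}}\leq\eta$. Then $V:=\tfrac{1}{1+\eta}\,\Pi S$ maps into $\free{M_m}$, has norm at most $\lambda$ --- so the exact constant is restored by a trivial rescaling, not by a weak* limit; this is not the delicate point you identify --- and almost fixes the molecules $\delta_x$, $x\in M_m$. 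Dualising $V$ gives $T_m$, and running this over better and better choices of $S$ with increasing thresholds $m_1<m_2<\cdots$ yields the whole sequence with error tending to $0$. The achievable error for each index comes out of this construction a posteriori; it cannot be prescribed beforehand, which is precisely what your outline assumes.
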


We will need the following theorem, whose proof can be found in e.g.~\cite{lindenstrausstzafriri}*{Theorem 1.e.15}.

\begin{theorem}[Grothendieck]\label{thm:mapgrothendieck}
	If $X$ is a separable dual Banach space with the AP then $X$ has the MAP.
\end{theorem}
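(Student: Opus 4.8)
The plan is to reduce the metric approximation property to a single trace inequality, and then to extract that inequality from the approximation property by exploiting the weak*-compactness afforded by the dual structure. Write $X=Z^*$ for a predual $Z$, let $\mathcal{F}(X)$ denote the space of finite-rank operators on $X$, and let $\tau_c$ be the topology on the space $\mathcal{L}(X)$ of bounded operators given by uniform convergence on compact subsets of $X$. Set $\mathcal{C}=\{T\in\mathcal{F}(X):\nm{T}\le 1\}$. The point of departure is the standard reformulation: $X$ has the AP exactly when $I_X$ lies in the $\tau_c$-closure of $\mathcal{F}(X)$, whereas $X$ has the MAP exactly when $I_X$ lies in the $\tau_c$-closure of the convex set $\mathcal{C}$. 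Since $\mathcal{C}$ is convex, a Hahn--Banach separation argument in the locally convex space $(\mathcal{L}(X),\tau_c)$, over the reals, shows that it suffices to prove $\phi(I_X)\le\sup_{T\in\mathcal{C}}\phi(T)$ for every $\tau_c$-continuous linear functional $\phi$ on $\mathcal{L}(X)$.

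The next step is to make both sides of this inequality concrete. A $\tau_c$-continuous functional $\phi$ is represented by nuclear data: there are a null sequence $(a_n)\subseteq X$ and a sequence $(b_n)\subseteq X^*$ with $\sum_n\nm{a_n}\,\nm{b_n}<\infty$ such that $\phi(T)=\sum_n b_n(Ta_n)$ for all $T\in\mathcal{L}(X)$. Writing $u=\sum_n a_n\otimes b_n$ for the associated nuclear operator on $X$, a direct computation gives $\phi(T)=\operatorname{tr}(uT)$, and in particular $\phi(I_X)=\sum_n b_n(a_n)=\operatorname{tr}(u)$. It is precisely here that the AP is used in an essential way: by the standard trace characterisation of the approximation property (Grothendieck), the hypothesis that $X$ has the AP is equivalent to the trace being a well-defined functional on nuclear operators, so that $\operatorname{tr}(u)$ does not depend on the representation of $u$ and the identity $\phi(I_X)=\operatorname{tr}(u)$ is meaningful. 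Thus the separation inequality becomes the trace inequality $\operatorname{tr}(u)\le\sup_{T\in\mathcal{C}}\operatorname{tr}(uT)$.

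The main obstacle is this trace inequality, and it is here, rather than in the AP itself, that the hypotheses of separability and duality become indispensable; recall that the AP implies neither the MAP nor even the BAP in general. The goal is to produce, for each $\delta>0$, a finite-rank operator $T\in\mathcal{C}$ with $\operatorname{tr}(uT)\ge\operatorname{tr}(u)-\delta$. The AP alone supplies finite-rank operators $R$ with $R\to I_X$ uniformly on the relatively compact set $\{a_n:n\in\N\}$, whence $\operatorname{tr}(uR)=\sum_n b_n(Ra_n)\to\operatorname{tr}(u)$; the defect is that the norms $\nm{R}$ are uncontrolled. To repair this one passes to the predual: the relevant operators are compared with their adjoints acting on $B_X=B_{Z^*}$, which is weak*-compact and, since $X$ (equivalently $Z$) is separable, weak*-metrizable. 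A Banach--Alaoglu and diagonal-extraction argument then allows the unbounded approximants to be replaced by operators of norm at most one that still reproduce the value $\operatorname{tr}(u)$ when tested against the summable data $(a_n,b_n)$, yielding the desired $T\in\mathcal{C}$. Carrying out this normalisation rigorously, while preserving finite rank and the trace value, is the technical heart of the argument.

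Assembling the three steps, we obtain $\phi(I_X)\le\sup_{T\in\mathcal{C}}\phi(T)$ for every $\tau_c$-continuous $\phi$, so $I_X$ belongs to the $\tau_c$-closure of $\mathcal{C}$ and $X$ has the MAP. Finally, the separability of $X$ permits the approximating net of norm-one finite-rank operators to be taken as a sequence, which is the form in which the metric approximation property is typically applied.
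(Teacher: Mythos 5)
First, a point of comparison: the paper does not prove this theorem at all --- it is quoted with a citation to Lindenstrauss--Tzafriri, Theorem 1.e.15, so the only fair benchmark is the classical Grothendieck argument given there. Your first two steps reproduce its skeleton correctly and are standard: the $\tau_c$-characterisations of AP and MAP, the Hahn--Banach separation reducing MAP to $\phi(I_X)\le\sup_{T\in\mathcal{C}}\phi(T)$, and the representation of $\tau_c$-continuous functionals as $\phi(T)=\sum_n b_n(Ta_n)$ with $(a_n)$ null and $\sum_n\nm{a_n}\,\nm{b_n}<\infty$. (One quibble: for the separation inequality itself you do not need the trace to be well defined on nuclear operators; $\phi(I_X)=\sum_n b_n(a_n)$ is computed from the fixed representation, and the AP enters simply by supplying finite-rank $R$ with $R\to I_X$ uniformly on $\{a_n\}\cup\{0\}$, so that $\sum_n b_n(Ra_n)\to\sum_n b_n(a_n)$. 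The ``trace well-definedness'' framing is harmless but not the mechanism.)

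The genuine gap is the third step, and you have in effect flagged it yourself by calling the normalisation ``the technical heart'' without carrying it out. The claim that ``a Banach--Alaoglu and diagonal-extraction argument allows the unbounded approximants to be replaced by operators of norm at most one that still reproduce $\operatorname{tr}(u)$'' is not an argument, and as stated it cannot be made into one: without a uniform bound on $\nm{R_k}$ there is no compactness to apply (the norm bound is the conclusion, not a hypothesis); weak*-operator limit points of finite-rank operators need not have finite rank, so compactness cannot deliver elements of $\mathcal{C}$; and rescaling $R_k$ to norm one destroys the convergence of the trace pairing. Indeed, since your compactness step uses the approximants only through the pairing values $\operatorname{tr}(uR_k)$, the same words would ``prove'' statements known to be false --- by Figiel--Johnson the AP does not imply even the BAP --- so all the content of the theorem is concentrated in the missing step. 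The classical proof does something genuinely different there: using that $X=Z^*$ is separable, $(B_{X^*},w^*)$ is metrisable and $B_Z$ is $w^*$-dense in $B_{X^*}$ by Goldstine, so each $b_n$ is a pointwise limit of a \emph{sequence} from $Z$ with controlled norms; a dominated-convergence perturbation (using $\sum_n\nm{a_n}\,\nm{b_n}<\infty$) then reduces to the case where all $b_n$ lie in the predual, making $u$ weak*-continuous, after which the trace inequality is derived by testing against adjoints $S^*$ of finite-rank operators $S$ on $Z$, for which $\nm{S^*}=\nm{S}$ gives the norm control for free. None of this predual-replacement machinery --- which is precisely where separability and duality do their work --- appears in your sketch, so the proposal is a correct outline of the classical route with its decisive step left unproven.
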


Let $T$ be a compact and metrisable space. For $d\in\cM^T$, recall that the metric space $(T,d)$ is called \emph{purely 1-unrectifiable} if there is no pair $(A,h)$, where $A\subseteq \R$ is a compact set of positive Lebesgue measure, and $h\colon A\to (T,d)$ is a bilipschitz map (see the end of the paragaph before \cite[Theorem A]{purely1unrect}). Define the set $$\cP^T = \{d\in\cM^T : (T,d) \text{ is purely 1-unrectifiable}\}.$$ The next proposition is similar to \cite[Proposition 3.6]{talimpublished}. The only difference in the proofs is the substitution of the space $K$ (in \cite{talimpublished}) with $T$, and the addition of the density argument at the end of the next proof. We include the proof of this result for completeness.

\begin{proposition}\label{prop:p1uresidual}
	The set $\cP^T$ is dense and $G_\delta$ in $\cM^T$.
\end{proposition}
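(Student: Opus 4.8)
The plan is to prove the two assertions separately: that $\cP^T$ is $G_\delta$, and that it is dense. For the first I would follow the template of \cite[Proposition 3.6]{talimpublished} (reading $T$ in place of the Cantor space), and the density will be the genuinely new ingredient.

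For the $G_\delta$ claim I would write $\cM^T\setminus\cP^T$ as a countable union of closed sets. A metric $d$ lies in this complement precisely when there is a compact $A\subseteq\R$ of positive Lebesgue measure and a bilipschitz $h\colon A\to(T,d)$; quantifying this, for $n,L\in\N$ I set (with $|A|$ denoting Lebesgue measure)
\[
	\mathcal{N}_{n,L}=\Bigl\{d\in\cM^T:\ \exists\,A\subseteq\R\text{ compact},\ |A|\ge\tfrac1n,\ \exists\,h\colon A\to T\text{ with }\tfrac1L|s-t|\le d(h(s),h(t))\le L|s-t|\ \forall s,t\in A\Bigr\}.
\]
Then $\cM^T\setminus\cP^T=\bigcup_{n,L}\mathcal{N}_{n,L}$, so it suffices to show each $\mathcal{N}_{n,L}$ is closed. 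I would take $d_k\to d$ in $\cM^T$ with witnesses $(A_k,h_k)$ for $d_k\in\mathcal{N}_{n,L}$ and extract a limiting witness. The lower bilipschitz bound together with $\diam_{d_k}(T)\to\diam_d(T)$ confines all $A_k$, after translation, to a fixed bounded interval $J$, so by the Blaschke selection theorem a subsequence of $(A_k)$ converges in the Hausdorff metric to a compact $A_\infty\subseteq J$. Since each $h_k$ is $L$-Lipschitz into the compact space $T$, a diagonal argument over a countable dense $Q\subseteq A_\infty$ (pulling each $q\in Q$ back to nearby $q_k\in A_k$ via Hausdorff convergence, and using $\nm{d_k-d}_\infty\to0$ so that $d_k(x_k,y_k)\to d(x,y)$ whenever $x_k\to x$, $y_k\to y$) yields a limit $h_\infty\colon A_\infty\to(T,d)$ still obeying the two-sided bound with constant $L$; in particular $h_\infty$ is injective.

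The one delicate point is the measure, i.e. checking that $|A_\infty|\ge1/n$ survives the Hausdorff limit. Here the argument is in fact clean, because Lebesgue measure is \emph{upper} semicontinuous along Hausdorff-convergent compacta: if $A_k\subseteq B_{\epsilon_k}(A_\infty)$ with $\epsilon_k\to0$, then $|A_k|\le|B_{\epsilon_k}(A_\infty)|\downarrow|A_\infty|$ by continuity from above, since $\bigcap_{\epsilon>0}B_\epsilon(A_\infty)=A_\infty$. Hence $|A_\infty|\ge\limsup_k|A_k|\ge1/n$, so $(A_\infty,h_\infty)$ witnesses $d\in\mathcal{N}_{n,L}$ and $\mathcal{N}_{n,L}$ is closed. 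This simultaneous passage to the limit of the domains $A_k$, the maps $h_k$ \emph{and} the metrics $d_k$ is the main obstacle, and it is exactly the favourable (upper semicontinuous) behaviour of measure under Hausdorff convergence that makes it go through.

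For density I would use snowflaking. Given $d\in\cM^T$ and $0<\alpha<1$, the snowflake $d^\alpha$ is again a metric compatible with the topology of $T$, so $d^\alpha\in\cM^T$; and since $T$ is compact, $\sup_{0\le t\le\diam_d(T)}|t^\alpha-t|\to0$ as $\alpha\to1^-$ gives $\nm{d^\alpha-d}_\infty\to0$. It remains to see that $(T,d^\alpha)\in\cP^T$ for every $\alpha<1$. If some bilipschitz $h\colon A\to(T,d^\alpha)$ existed with $A$ of positive measure, then $h$ would be bi-H\"older of exponent $1/\alpha>1$ into $(T,d)$, i.e. $d(h(s),h(t))\asymp|s-t|^{1/\alpha}$. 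Choosing a Lebesgue density point $a$ of $A$ and, for large $N$, points $a=u_0<u_1<\dots<u_N$ of $A$ subdividing a short interval about $a$ into $N$ nearly equal pieces (possible because the density property forces $A$ to meet every small subinterval near $a$), the lower bound and the triangle inequality yield $c\,|u_N-u_0|^{1/\alpha}\le d(h(u_N),h(u_0))\le\sum_i d(h(u_{i+1}),h(u_i))\lesssim N\cdot(|u_N-u_0|/N)^{1/\alpha}$, so $c\lesssim N^{1-1/\alpha}\to0$, a contradiction. Hence every $d^\alpha$ is purely 1-unrectifiable; the family $d^\alpha\to d$ shows $\cP^T$ is dense, and together with the previous paragraphs the proposition follows.
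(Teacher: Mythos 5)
Your proposal is correct, and its skeleton is the same as the paper's: write $\cM^T\setminus\cP^T$ as a countable union of sets, indexed by a bilipschitz constant and a lower bound on Lebesgue measure, that are closed under uniform convergence of metrics, and obtain density by snowflaking. The implementations, however, genuinely differ. For closedness, the paper's sets $V_{m,k}$ are parametrized by bilipschitz maps $h\colon K\to\R$ with $K\subseteq T$ compact and Lebesgue measure of $h(K)$ at least $k^{-1}$ (the equivalent, inverse-direction formulation of failing pure 1-unrectifiability); the limiting witness is produced by compactness of the hyperspace $2^T$ in the Vietoris topology to converge the sets $K_n$, McShane extension of each $h_n$ to all of $T$, and Arzel\`a--Ascoli to converge the maps. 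You instead parametrize by maps $h\colon A\to T$ with $A\subseteq\R$ compact, which matches the paper's definition of pure 1-unrectifiability verbatim, and converge the witnesses by Blaschke selection inside a fixed interval together with a diagonal argument over a countable dense subset of $A_\infty$, extending to the closure by the Lipschitz bound. The measure step is the same idea in both proofs, namely continuity from above of Lebesgue measure on shrinking $\epsilon$-neighbourhoods: the paper applies it to neighbourhoods of the image $h(K)$, you to neighbourhoods of the limit domain $A_\infty$. The paper's route yields a limit function defined on all of $T$ at once, at the price of invoking the extension theorem and hyperspace compactness; yours stays closer to the definition and needs only one-dimensional compactness plus diagonalization. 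For density, both proofs use the snowflake $d^\alpha$; the paper simply cites \cite{purely1unrect} for the fact that snowflakes are purely 1-unrectifiable, whereas your density-point chaining argument (bilipschitz into $(T,d^\alpha)$ forces a two-sided H\"older bound of exponent $1/\alpha>1$ into $(T,d)$, and chaining $N$ points near a density point gives a constant bounded by $N^{1-1/\alpha}\to 0$) proves this from scratch, making your proof self-contained at the cost of some extra detail. If you write it up, do record that the density point $a$ can be chosen in $A$ (true for almost every point of $A$) and that two-sided density at $a$ gives the one-sided density you use to build the increasing chain $u_0<\dots<u_N$.
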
	
\begin{proof}
	We first prove that $\cP^T$ is a $G_\delta$ set in $\cM^T$. We denote the Lebesgue measure of a measurable set $A\subseteq \R$ by $\Delta(A)$. For $m,k\in\N$ define
	\begin{align*}
		V_{m,k} = \{& d\in \cM^T \; :\; \text{there exists a compact } K\subseteq T \text{ and } h\colon K \to \R, \text{ such that } \\& m^{-1} d(x,y) \leq |h(x)-h(y)| \leq md(x,y) \text{ for all } x,y\in K, \text{ and } \Delta(h(K)) \geq k^{-1} \},
	\end{align*}
	We will prove that $V_{m,k}$ is closed. Let $(d_n)_{n\in\N} \subseteq V_{m,k}$ converge uniformly to $d\in\cM^T$, and let $K_n$ and $h_n\colon K_n\to\R$ be the compact set and bilipschitz function, respectively, associated to each $d_n$. The space $2^T$ of all closed subsets of $T$, equipped with the Vietoris topology, is compact and metrisable by \cite[Problem 3.12.27 and Problem 4.5.23 (e)]{engelking}. Therefore, by taking a subsequence, we can assume that the $K_n$ converge to a compact set $K\subseteq T$ in the Vietoris topology. We extend each function $h_n$ to a function (again denoted by $h_n$) on $T$ by \cite{weaver}*{Theorem 1.33}, while preserving its Lipschitz constant with respect to $d_n$. We can assume, by translation, that $0\in h_n(T)$ for each $n$. As the diameters of $(T,d_n)$ are uniformly bounded, the functions $h_n$ are uniformly bounded. 
	
	We will show that they are also $d$-equicontinuous. Pick $\epsilon > 0$, and pick $n\in\N$ such that $\nm{d_i-d}_\infty < \frac{\epsilon}{2m}$ for $i\geq n$. Then for $i\geq n$ and $x,y\in T$ such that $d(x,y) < \frac{\epsilon}{2m}$, $$|h_i(x) - h_i(y)| \leq md_i(x,y) \leq m\left(\frac{\epsilon}{2m} + d(x,y)\right) < \epsilon.$$ Therefore $(h_n)_n$ is $d$-equicontiuous. By the Arzel\` a-Ascoli theorem, there exists $h\in C(T)$ and a subsequence $(h_{n_i})_i$ of $(h_n)_n$ such that $h_{n_i} \to h$ uniformly. For $x,y\in T$, $$|h(x)- h(y)| = \lim_{i\to\infty} |h_{n_i}(x)- h_{n_i}(y)| \leq \lim_{i\to\infty} m d_{n_i}(x,y) = md(x,y).$$ Hence $\Lip_d (h) \leq m$. Now let $x,y\in K$ be arbitrary and let $(x_i)_i, (y_i)_i \subseteq T$ converge to $x$ and $y$, respectively, and be such that $x_i,y_i\in K_{n_i}$ for each $i\in\N$. Let $\epsilon > 0$ and pick $i$ such that $$d(x,x_i), d(y, y_i), \nm{h - h_{n_i}}_\infty, \nm{d-d_{n_i}}_\infty < \epsilon.$$ Then
	\begin{align*}
		|h(x)-h(y)| &\geq |h(x_i) - h(y_i)| - |h(x) - h(x_i)| - |h(y) - h(y_i)| \\&\geq |h(x_i) - h(y_i)| - 2m\epsilon \geq |h_{n_i}(x_i) - h_{n_i}(y_i)| - (2+2m)\epsilon \\&\geq m^{-1} d_{n_i}(x_i,y_i) - (2+2m)\epsilon \geq m^{-1} d(x_i,y_i) - (2+2m+m^{-1})\epsilon \\&\geq m^{-1} (d(x,y)-d(x,x_i)-d(y,y_i)) - (2+2m+m^{-1})\epsilon \\&\geq m^{-1} d(x,y) - (2+2m+3m^{-1})\epsilon.
	\end{align*}
	As $\epsilon$ was arbitrary, we get $|h(x)-h(y)| \geq m^{-1}d(x,y)$. This means that $h$ satisfies the bilipschitz condition in the definition of $V_{m,k}$ on the set $K$.
	
	To show that $\Delta(h(K)) \geq k^{-1}$, pick $\epsilon > 0$ and set $U = B_\epsilon^{|\cdot|}(h(K))$. We have that $h^{-1}(U)$ is an open set in $T$ containing $K$. Choose $i$ such that $K_{n_i} \subseteq h^{-1}(U)$ and $\nm{h-h_{n_i}}_\infty < \epsilon$. Then $$h_{n_i}(K_{n_i}) \subseteq B_\epsilon^{|\cdot|}(h(K_{n_i})) \subseteq B_\epsilon^{|\cdot|}(U) \subseteq B_{2\epsilon}^{|\cdot|}(h(K)).$$ Hence $\Delta(B_{2\epsilon}^{|\cdot|}(h(K))) \geq \Delta(h_{n_i}(K_{n_i})) \geq k^{-1}$. As $\epsilon$ was arbitrary, we get  $\Delta(h(K)) \geq k^{-1}$. Therefore $d\in V_{m,k}$ and so $V_{m,k}$ is closed. As $\cM^T\setminus\cP^T = \bigcup_{m,k\in\N} V_{m,k}$, we have that $\cM^T\setminus\cP^T$ is $F_\sigma$, and so $\cP^T$ is $G_\delta$. To show that $\cP^T$ is dense, pick $d\in\cM^T$ and $\epsilon > 0$. We can find some $\delta > 0$ such that $\nm{d^{1-\delta} - d}_\infty < \epsilon$. By the second sentence after the proof of \cite[Theorem 4.6]{purely1unrect}, $d^{1-\delta} \in\cP^T$. Therefore $\cP^T$ is dense in $\cM^T$.
\end{proof}

The next lemma will be needed in the proof of \Cref{thm:main} in \Cref{sec:strcount}.

\begin{lemma}\label{lm:residualitysubset}
	If $C\subseteq T$ is a closed subset and $\cB\subseteq \cM^C$ is residual in $\cM^C$, then the set $\cU = \{d\in \cM^T : d\restrict _{C^2}\in \cB\}$ is residual in $\cM^T$.
\end{lemma}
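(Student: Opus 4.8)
The plan is to study the restriction map $R\colon\cM^T\to\cM^C$, $R(d)=d\restrict_{C^2}$, and to show that $\cU=R^{-1}(\cB)$ inherits residuality because $R$ is a continuous, surjective, open map. First I would observe that $R$ is $1$-Lipschitz for the supremum norms, since $\nm{d_1\restrict_{C^2}-d_2\restrict_{C^2}}_\infty\le\nm{d_1-d_2}_\infty$; in particular $R$ is continuous, so preimages of open sets are open. As $\cM^T$ is Polish, hence Baire, it suffices to exhibit a dense $G_\delta$ set inside $\cU$. Writing the residual set as $\cB\supseteq\bigcap_n G_n$ with each $G_n$ open and dense in $\cM^C$, we have $\bigcap_n R^{-1}(G_n)\subseteq\cU$, and each $R^{-1}(G_n)$ is open. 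The whole statement therefore reduces to: $R^{-1}(G)$ is dense whenever $G\subseteq\cM^C$ is open and dense, because a countable intersection of dense open subsets of the Baire space $\cM^T$ is a dense $G_\delta$.

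This density of $R^{-1}(G)$ follows once $R$ is an open surjection: for nonempty open $W\subseteq\cM^T$ the image $R(W)$ is then nonempty and open, hence meets $G$, and any point of $W$ mapping into $R(W)\cap G$ shows $W\cap R^{-1}(G)\ne\emptyset$. Surjectivity of $R$ is exactly the classical Hausdorff metric extension theorem, that a metric on the closed set $C$ compatible with its topology extends to a compatible metric on $T$. The substance is thus the openness of $R$, which I would phrase as a quantitative extension lemma: for every $d_0\in\cM^T$ and $\epsilon>0$ there is $\delta>0$ such that every $e\in\cM^C$ with $\nm{e-d_0\restrict_{C^2}}_\infty<\delta$ extends to some $d\in\cM^T$ with $d\restrict_{C^2}=e$ and $\nm{d-d_0}_\infty<\epsilon$; this gives the inclusion of the $\delta$-ball about $R(d_0)$ inside the image of the $\epsilon$-ball about $d_0$. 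I note that, since $G$ is \emph{open}, the density argument in fact only requires the restriction $d\restrict_{C^2}$ to land within a prescribed tolerance of a target metric chosen from $G$, rather than exactly hitting a prescribed $e$.

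For the construction I would take an honest compatible metric extension $d_1$ of $e$ (Hausdorff again) and glue it to $d_0$ away from $C$: keep $d=d_1$ on a neighbourhood $\{x:d_0(x,C)\le\eta\}$ of $C$ and transition to $d_0$ outside it, the transition being realised as a shortest-chain (path) metric so that the triangle inequality holds automatically. Keeping $d$ equal to the genuine metric $d_1$ near $C$ is what protects the approximate identity $d\restrict_{C^2}\approx e$: a chain of points of $C$ staying near $C$ then uses $e$-edges, whose sum is at least $e(p,q)$ by the triangle inequality for $e$, while an excursion into the $d_0$-region is too long to create a shortcut at scale $\eta$, so that choosing $\delta$ and $\eta$ small forces $d\restrict_{C^2}$ into $G$. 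Supremum closeness $\nm{d-d_0}_\infty<\epsilon$ is then arranged because $d=d_0$ on the far pairs that dominate the supremum, the glued collar contributes only at scale $\eta$, and pairs that are close in $d_0$ contribute a difference that is automatically small, both distances being small.

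The step I expect to be the main obstacle is precisely this extension lemma. Its subtlety is that supremum-close compatible metrics on $C$ can be wildly different at small scales: a high-frequency oscillation of $e$ about $e_0=d_0\restrict_{C^2}$ keeps $\nm{e-e_0}_\infty$ tiny while changing intrinsic and path distances by a definite amount. Consequently the naive recipe $d=\min\bigl(d_0,\ \inf_{a,b\in C}[d_0(\cdot,a)+e(a,b)+d_0(b,\cdot)]\bigr)$ fails, since its restriction to $C$ is essentially the path metric generated by $\min(e,e_0)$, which can collapse far below $e$, and it satisfies the triangle inequality only up to an error of order $\delta$. Reconciling the three requirements simultaneously — that $d$ be a genuine compatible metric, that $d\restrict_{C^2}$ lie in $G$, and that $\nm{d-d_0}_\infty$ stay below $\epsilon$ — by working with the true extension $d_1$ near $C$ and gluing to $d_0$ only where $d_0$ already separates $C$ from the rest of $T$, is where the real work lies.
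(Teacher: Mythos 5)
Your reduction is exactly the one in the paper: write $\cB\supseteq\bigcap_n \cB_n$ with each $\cB_n$ open and dense in $\cM^C$, note that the sets $\cU_n=\{d\in\cM^T : d\restrict_{C^2}\in\cB_n\}$ are open by continuity of the restriction map, and reduce the lemma to the density of each $\cU_n$ in $\cM^T$. You have also correctly isolated the crux, namely a quantitative extension statement. The paper disposes of that crux in one line by citing Ishiki's interpolation theorem \cite[Theorem 1.1]{ishiki2020interpolation}, which produces, for every $e\in\cM^C$, an extension $\bar e\in\cM^T$ of $e$ with $\nm{\bar e-d_0}_\infty\le\nm{e-d_0\restrict_{C^2}}_\infty$ (so one can even hit $e$ exactly, with no loss). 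Your proposal instead attempts to prove this extension statement from scratch by gluing, and that is where it has a genuine gap.

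The gluing construction fails, and specifically the claim that ``an excursion into the $d_0$-region is too long to create a shortcut at scale $\eta$'' is false: the harmful chains consist of many \emph{shallow} excursions, each saving a constant fraction of its own cost, so no choice of $\delta$ and $\eta$ helps. Concretely, take $T=[0,1]^2$ with $d_0$ Euclidean, $C=[0,1]\times\{0\}$, and $e(x,y)=|f(x)-f(y)|$, where $f$ is the increasing homeomorphism of $[0,1]$ with $f'=\epsilon'$ on the intervals $[2k\delta,(2k+1)\delta]$ and $f'=2-\epsilon'$ on the intervals $[(2k+1)\delta,(2k+2)\delta]$; then $\nm{e-d_0\restrict_{C^2}}_\infty\le 2\delta$. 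Any compatible extension $d_1$ of $e$ is uniformly continuous on $T^2$, so $\sup_{x,y}|d_1((x,t),(y,t))-e(x,y)|\to 0$ as $t\to 0$: the oscillation of $d_1$ against $d_0$ persists at heights just below any collar width $\eta$. Now run a chain from $(0,0)$ to $(1,0)$ at height $\approx\eta$: on each interval where $f$ is flat, use a $d_1$-edge inside the collar (cost $\approx\epsilon'\delta$); on each interval where $f$ is steep, use a $d_0$-edge with one endpoint just outside the collar (legal by your rules, cost $\approx\delta$, versus $d_1$-cost $\approx(2-\epsilon')\delta$), then re-enter the collar at negligible vertical cost. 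Over the $1/(2\delta)$ periods the total cost is $\approx(1+\epsilon')/2+o(1)$ as $\eta\to0$ with $\delta$ fixed, while $e\bigl((0,0),(1,0)\bigr)=d_0\bigl((0,0),(1,0)\bigr)=1$. Thus for every $\eta$ the glued chain metric collapses distances within $C$ by a factor close to $2$, so both of your requirements fail: $d\restrict_{C^2}$ is not within the prescribed tolerance of $e$, and $\nm{d-d_0}_\infty\ge\frac12-o(1)$. This is the very collapse you flagged for the naive recipe; pushing it a collar's width away from $C$ does not remove it, because the oscillation of $e$ against $d_0\restrict_{C^2}$ transports continuously to every neighbourhood of $C$, the per-excursion saving ($\sim\delta$) does not shrink as $\eta\to0$, and the cost of crossing the collar boundary does. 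Enlarging the collar instead trades this problem for total loss of sup-norm control of the black-box Hausdorff extension $d_1$ against $d_0$ away from $C$. So the key lemma is true but cannot be obtained by Hausdorff extension plus path-metric gluing; without invoking (or genuinely reproving) Ishiki's theorem, the argument is incomplete.
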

\begin{proof}
	Let $(\cB_n)_n$ be a sequence of dense open sets in $\cM^C$ such that $\bigcap _{n=1}^\infty \cB_n \subseteq \cB$. Define $\cU_n = \{d\in \cM^T : d\restrict _{C^2}\in \cB_n\}$ for each $n\in\N$. For $n\in\N$, it is easy to see that  $\cU_n$ is open. To show that it is dense, let $d\in\cM^T$ and $\epsilon > 0$. As $\cB_n$ is dense in $\cM^C$, we can find $e\in \cB_n$ such that $\nm{e-d\restrict _{C^2}}_\infty < \epsilon$. By \cite[Theorem 1.1]{ishiki2020interpolation}, we can find an extension $\bar e\in\cM^T$ of $e$ such that $\nm{\bar e - d}_\infty < \epsilon$. Therefore $\bar e \in \cU_n$ and so $\cU_n$ is dense in $\cM^T$. Finally, $\bigcap_{n=1}^\infty \cU_n \subseteq \cU$ and $\cU$ is residual.
\end{proof}

We will next define the covering dimension of normal topological spaces by following \cite[Section 1.6]{engelking1995theory}. Let $S$ be a normal topological space. Suppose that $\cV$ a family of subsets of $S$. We define the \emph{order} of $\cV$ to be the largest integer $n$ such that there are $n+1$ sets in $\cV$ with nonempty intersection. If there is no largest such integer, then we say that $\cV$ is of infinite order.

\begin{definition}[\text{cf.~\cite[Definition 1.6.7]{engelking1995theory}}]\label{def:covdim}
	The \emph{covering dimension} of $S$, denoted by $\dim S$, is defined by the following conditions:
	\begin{enumerate}[label=(\roman*)]
		\item $\dim S \leq n$ if every finite open cover of $S$ has a finite refinement of order at most $n$,
		\item $\dim S = n$ if $\dim S\leq n$ and $\dim S \leq n-1$ does not hold,
		\item $\dim S = \infty$ if $\dim S \leq n$ does not hold for every integer $n$.
	\end{enumerate}
\end{definition}

Recall that if $S$ is separable and metrisable, and $P\subseteq S$ is a subspace, then $\dim P\leq \dim S$ by \cite[Theorem 1.1.2 and Theorem 1.7.7]{engelking1995theory}. We next introduce definitions related to infinite-dimensional spaces.

\begin{definition}[\text{cf.~\cite[Definition 5.5.1]{engelking1995theory}}]
	The space $S$ is called \emph{locally finite dimensional} if every point $x\in S$ has a normal neighbourhood $U$ such that $\dim U < \infty$. 
\end{definition}

\begin{definition}[\text{cf.~\cite[Definition 5.1.4]{engelking1995theory}}]\label{def:strcount}
	The space $S$ is called \emph{strongly countable-dimensional} if $S = \bigcup_{n=1}^\infty S_n$ where each $S_n$ is a closed finite-dimensional subset of $S$.
\end{definition}

Using transfinite recursion, we will define the transfinite dimensional kernel and transfinite kernel dimension of $S$, following \cite[Section 7.3]{engelking1995theory}. For an ordinal $\alpha$, let $\lambda(\alpha)$ and $l(\alpha)$ be the unique limit and finite ordinal, respectively, so that $\alpha = \lambda(\alpha) + l(\alpha)$. We define a decreasing transfinite sequence (with respect to inclusion) $(S_\alpha)_\alpha$, and a transfinite sequence $D_\alpha(S)$ of subsets of $S$. Define $$D_n(S) = \bigcup \{U\subseteq S : U \text{ is open and }\dim \cl(U) \leq n\}.$$ Then define $$S_\alpha = S\setminus \bigcup_{\beta < \alpha} D_\beta(S), \text{ and } D_\alpha(S) = D_{l(\alpha)}(S_{\lambda(\alpha)}).$$ 

By \cite[Proposition 7.3.2]{engelking1995theory}, $S_\alpha$ is closed in $S$ for every $\alpha$. Since $(S_\alpha)_\alpha$ is decreasing (with respect to inclusion), there exists an ordinal $\alpha$ such that $S_{\beta} = S_\gamma$ for all $\beta, \gamma\geq \alpha$. If $\alpha$ is the least such ordinal, then $S_\alpha$ is called the \emph{transfinite dimensional kernel} of $S$, and it is denoted by $k(S)$. We define the \emph{transfinite kernel dimension} of $S$, $\trker S$, by setting $\trker S = \alpha$ if $S_\alpha = \emptyset$, and $\trker S = \infty$ otherwise (here $\infty$ is a symbol distinct from all ordinals). We end this section with the following lemma, which will be used in the proof of the main result in \Cref{sec:strcount}.
\begin{lemma}\label{lm:kernelemptyinter}
	If $S$ is metrisable and $k(S) = \emptyset$, then $\inter(S_\omega) = \emptyset$.
\end{lemma}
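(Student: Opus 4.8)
The plan is to argue by contradiction. Suppose $V := \inter(S_\omega)$ is nonempty; since $S_\omega$ is closed, $V$ is a nonempty open subset of $S$ with $V \subseteq S_\omega$. The whole strategy is to show that $V \subseteq S_\alpha$ for \emph{every} ordinal $\alpha$, by transfinite induction. Since $k(S) = \emptyset$, there is an ordinal $\alpha_0$ with $S_{\alpha_0} = \emptyset$, and then $V \subseteq S_{\alpha_0} = \emptyset$ contradicts $V \neq \emptyset$.

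The one fact driving everything is that, because $S_\omega = S \setminus \bigcup_{n<\omega} D_n(S)$, the set $V$ is disjoint from $D_n(S)$ for every finite $n$; equivalently, no nonempty open $O \subseteq V$ can satisfy $\dim \cl(O) \le n$ for any finite $n$ (such an $O$ would lie in $D_n(S)$, contradicting $V \cap D_n(S) = \emptyset$). I would record this as the key observation and feed it into the induction.

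For the induction itself, the base case $V \subseteq S_0 = S$ is trivial, and the limit case is immediate once one checks that $S_\alpha = \bigcap_{\gamma<\alpha} S_\gamma$ for limit $\alpha$ (which follows directly from $S_\alpha = S \setminus \bigcup_{\gamma<\alpha}D_\gamma(S)$ together with the fact that every $\gamma<\alpha$ has $\gamma+1 < \alpha$). The real content is the successor step. Writing $\alpha = \gamma+1$ and $\gamma = \mu + n$ with $\mu = \lambda(\gamma)$ a limit (or $0$) and $n = l(\gamma)$ finite, we have $S_{\gamma+1} = S_\gamma \setminus D_\gamma(S)$ and $D_\gamma(S) = D_n(S_\mu)$; by the inductive hypothesis $V \subseteq S_\gamma \subseteq S_\mu$, so it suffices to show $V \cap D_n(S_\mu) = \emptyset$. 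Suppose not; then some $W$, open in $S_\mu$ with $\dim \cl_{S_\mu}(W) \le n$, meets $V$, and $O := W \cap V$ is a nonempty set that is open in $S_\mu$ and, since $V$ is open in $S$ with $V \subseteq S_\mu$, open in $S$ as well. Because $S_\mu$ is closed in $S$ (by \cite[Proposition 7.3.2]{engelking1995theory}), closures in $S_\mu$ and in $S$ coincide on subsets of $S_\mu$, so $\cl(O) \subseteq \cl_{S_\mu}(W)$; by monotonicity of the covering dimension on closed subspaces this yields $\dim \cl(O) \le n$. Thus $O$ is a nonempty open subset of $V$ with $\dim \cl(O) \le n$, contradicting the key observation. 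This closes the induction.

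The step I expect to be the main obstacle is precisely this successor reduction, where one must convert a set that is low-dimensional \emph{relative to the subspace} $S_\mu$ into one that is low-dimensional in $S$. The two ingredients that make it work — the closedness of each $S_\mu$ in $S$ (so that subspace closures agree with ambient closures) and the monotonicity of $\dim$ — are exactly what lets the finite levels $D_n(S)$, already removed in forming $S_\omega$, absorb all of the higher transfinite levels once they are restricted to $V$.
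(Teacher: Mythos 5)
Your proof is correct. The paper's own argument has the same basic shape --- show that the nonempty open set inside $S_\omega$ survives every stage of the transfinite construction and then contradict $k(S)=\emptyset$ --- but it implements this differently: it runs the kernel construction \emph{inside the subspace} $U$, observing that $D_n(U)=\emptyset$ for all finite $n$, hence $U_\alpha=U$ for every ordinal $\alpha$, and then invokes \cite[Lemma 7.3.7 (ii)]{engelking1995theory} to identify $U_{\trker S}$ with $U\cap S_{\trker S}=\emptyset$. You instead prove the containment $V\subseteq S_\alpha$ for all $\alpha$ by a direct transfinite induction in the ambient space, never forming the relative sequence $(V_\alpha)$. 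The price you pay is the successor step, where you must convert a set of low dimension relative to $S_\mu$ into one of low dimension in $S$; your handling of this (using that $S_\mu$ is closed in $S$, so subspace and ambient closures of $O\subseteq S_\mu$ agree, together with monotonicity of $\dim$ on closed subspaces) is exactly right, and in effect re-proves the special case of Engelking's Lemma 7.3.7 that the paper cites. What your route buys is self-containment --- no appeal to the relative-kernel lemma --- while the paper's route is shorter by outsourcing precisely that bookkeeping to the reference; note also that the paper's unproved claim ``$D_n(U)=\emptyset$'' in fact needs a small regularity argument (shrink an open set so its ambient closure sits inside the given one), whereas your ``key observation'' is stated directly in terms of ambient closures and is immediate from $V\cap D_n(S)=\emptyset$.
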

\begin{proof}
	Suppose that $U$ is a nonempty open set in $S$ such that $U\subseteq S_\omega$. Then $D_n(U) = \emptyset$ for all $n\in\N$, and so $U_n = U$ for all $n\in\N$. It is then easy to see that $U_\alpha = U$ for all ordinals $\alpha$. But by \cite[Lemma 7.3.7 (ii)]{engelking1995theory}, $U_{\trker S} = U\cap S_{\trker S} = \emptyset$, a contradiction.
\end{proof}

\section{The finite dimensional case}\label{sec:findim}

	The main result of this section is the following proposition.
	
	\begin{proposition}\label{prop:findimres}
		Let $T$ be a compact, metrisable and finite-dimensional space. Then $\cA^{T,1}$ is residual in $\cM^T$.
	\end{proposition}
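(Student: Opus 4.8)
The plan is to use Godefroy's criterion (Theorem \ref{thm:godefroycriterion}) and show that the set of metrics admitting good almost-extension operators is residual. Concretely, for a fixed sequence of parameters $\epsilon_n \to 0$, the strategy is to express $\cA^{T,1}$ (or a residual subset of it) as a countable intersection of dense open sets. For $d\in\cM^T$ one builds finite $\epsilon_n$-dense nets $M_n$ and seeks norm-one operators $T_n\colon \Lipo{M_n}\to\Lipo{T,d}$ whose restrictions to $M_n$ approximate the identity. The key geometric input is the finite covering dimension: since $\dim T \leq N$ for some $N$, every finite open cover of $T$ has a finite refinement of order at most $N$ (Definition \ref{def:covdim}), so one can construct controlled partitions of unity subordinate to balls of radius $\sim\epsilon_n$ in which each point lies in at most $N+1$ sets. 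This bounded multiplicity is exactly what lets one build a linear extension-type operator from functions on the net to Lipschitz functions on $T$ with Lipschitz norm controlled independently of $n$.

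First I would fix a countable dense sequence of parameters and, for each $n$ and each rational tolerance, define the set
\begin{equation*}
	\cW_{n,\eta} = \{ d\in\cM^T : \text{there is } T\text{-operator as in Theorem \ref{thm:godefroycriterion} of norm} \leq 1+\eta \text{ with error} < \eta \text{ at level } n \}.
\end{equation*}
The aim is to show each relevant $\cW_{n,\eta}$ is open and dense, and that a suitable countable intersection lands inside $\cA^{T,1}$. Openness should follow from the fact that the almost-extension inequalities are strict and that small uniform perturbations of $d$ change Lipschitz constants and the relevant suprema only slightly (the nets $M_n$ can be chosen to vary upper-semicontinuously, or one fixes a finite net and uses continuity of $d\mapsto \Lip_d$). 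Density is where the real work lies: given an arbitrary $d$ and $\epsilon>0$, I would perturb $d$ to a metric $d'$ with $\nm{d'-d}_\infty<\epsilon$ for which a genuine norm-one (or norm-$(1+\eta)$) extension operator exists at every level. The natural candidate is to pass to a metric making $T$ purely $1$-unrectifiable, invoking Proposition \ref{prop:p1uresidual} so that the snowflake-type perturbations $d^{1-\delta}$ remain available and dense; combined with the finite-dimensional partition-of-unity construction, these yield the required approximating operators.

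The central construction is the extension operator itself. Given a finite $\epsilon_n$-dense net $M_n$, I would take a cover of $T$ by balls $B_{\epsilon_n}(p)$, $p\in M_n$, extract a refinement of order at most $N+1$ using $\dim T\leq N$, and build a Lipschitz partition of unity $(\varphi_p)_{p}$ subordinate to this refinement with $\Lip_d(\varphi_p)\lesssim \epsilon_n^{-1}$. Then for $f\in\Lipo{M_n}$ I set $T_n(f) = \sum_p f(p)\varphi_p$ (suitably normalised to vanish at the base point). The bounded order ensures that at each point only $N+1$ terms are nonzero, so telescoping estimates give $\Lip_d(T_n(f)) \leq C_N \Lip(f)$ with $C_N$ depending only on $N$; the restriction error $\nm{T_n(f)\restrict_{M_n}-f}_\infty$ is $O(\epsilon_n)\Lip(f)$ because $\varphi_p(q)$ is concentrated near $p$. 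The difficulty is that this only gives a uniformly bounded constant $C_N$, not the sharp constant $1$. To reach $\lambda=1$ one must either refine the construction so that $C_N\to 1$ as the net is adapted to a well-chosen perturbed metric, or invoke the density of $\cP^T$ together with Grothendieck's theorem (Theorem \ref{thm:mapgrothendieck}): establishing the AP on a residual set and upgrading to the MAP since $\free{T,d}=\Lipo{T,d}^*$ is a separable dual.

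I expect the main obstacle to be precisely this passage from bounded $C_N$ to the optimal constant $1$, i.e.\ promoting the $C_N$-BAP to the $1$-BAP/MAP on a residual (not merely dense) set. The likely resolution is a two-step argument: show $\cA^{T,C_N}$ contains a dense $G_\delta$ via the partition-of-unity operators, intersect with the residual set $\cP^T$ from Proposition \ref{prop:p1uresidual} where additional rigidity (from pure $1$-unrectifiability and snowflake perturbations) forces the extension constants down to $1$, and finally use Theorem \ref{thm:mapgrothendieck} to convert the resulting AP into the MAP, which is exactly membership in $\cA^{T,1}$. Verifying that the perturbed metrics where $C_N=1$ is achievable are themselves dense, and that the whole collection assembles into a $G_\delta$, is the delicate quantitative heart of the proof.
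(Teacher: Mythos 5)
Your skeleton matches the paper's proof: a partition of unity subordinate to a cover of order at most $\dim T$ yields extension operators from finite nets whose norm is bounded by a dimensional constant $C_N$, giving the $C_N$-BAP on a residual set via \Cref{thm:godefroycriterion}, after which one intersects with $\cP^T$ and upgrades to the MAP by \Cref{thm:mapgrothendieck}. But the step that makes this upgrade legitimate is exactly the one you get wrong. You write that the MAP follows ``since $\free{T,d}=\Lipo{T,d}^*$ is a separable dual''; this duality is backwards. In general $\Lipo{T,d}$ is the dual of $\free{T,d}$, not vice versa: a free space is a predual and is typically \emph{not} itself a dual space, so Grothendieck's theorem does not apply off the shelf. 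The sole reason for intersecting with $\cP^T$ is that, by \cite[Theorem B]{purely1unrect}, for compact $(T,d)$ pure $1$-unrectifiability is precisely what makes $\free{T,d}$ a dual space; only then does \Cref{thm:mapgrothendieck} convert the AP (already supplied by the $C_N$-BAP) into the MAP, i.e.\ into membership in $\cA^{T,1}$. Your alternative account of the role of $\cP^T$ --- that ``rigidity \ldots forces the extension constants down to $1$'' --- describes a mechanism that does not exist and is not needed: the extension constants never improve below $C_N$, and no improvement is required, because the MAP is obtained functional-analytically rather than by sharpening the operators. As written, your final step applies Grothendieck's theorem to a space that you have not shown to be a dual space, so the argument collapses there; the fix is precisely the citation you never make.

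A secondary gap concerns residuality itself: you need the \emph{interiors} of your sets of metrics to be dense, i.e.\ the extension operator must survive a uniform perturbation of the metric, with the same net and a uniform norm bound. This does not follow from ``continuity of $d\mapsto\Lip_d$'' or upper-semicontinuous choice of nets alone. The paper's \Cref{prop:extmet} handles it by constructing a special perturbed metric $\bar d$ (a sup-over-Lipschitz-functions pseudometric plus a correction term) with the property that \emph{every} metric $e$ satisfying $\nm{e-\bar d}_\infty\leq \frac{\epsilon}{12(r+1)}$ admits an extension operator of norm at most $88(r+1)(2r+3)$, built from a partition of unity re-formed with respect to $e$; density of $\inter(\cB_n)$ is then immediate. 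You correctly flag this stability as the ``delicate quantitative heart,'' but your proposal does not contain the idea that resolves it.
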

	
	We will need the following lemma.
	\begin{lemma}\label{lm:nicecoverconstruct}
		Suppose that $T$ is a compact, metrisable and finite-dimensional space. Let $a_0\in T$, $d\in\cM^T$ and $\epsilon > 0$. Then there exists an open cover $(U_i)_{i=0}^n$ of $T$ of order at most $\dim T$ and a set $A = \{a_0,\ldots,a_n\}\subseteq T$ such that 
		\begin{enumerate}[label=(\roman*)]
			\item $a_i\in U_j$ if and only if $i=j$, \label{eq:aiinujiff}
			\item $U_i\subseteq B_{\frac{\epsilon}{2}}(a_i)$ for each $i$, \label{eq:duileqepsilon}
			\item $d(a_i,a_j) > \frac{\epsilon}{3}$ whenever $i\not=j$. \label{eq:aiajfar}
		\end{enumerate}
	\end{lemma}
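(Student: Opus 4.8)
The plan is to build the cover and the point set $A$ together, using the finite-dimensionality of $T$ to control the order. First I would fix a finite $\frac{\epsilon}{6}$-dense subset $A = \{a_0, \ldots, a_n\}$ of $(T,d)$ containing the prescribed point $a_0$; compactness guarantees such a finite set exists, and I would thin it out greedily so that $d(a_i, a_j) > \frac{\epsilon}{3}$ for $i \neq j$, which secures \ref{eq:aiajfar}. (One must check the greedy thinning still leaves an $\frac{\epsilon}{3}$-dense, hence $\frac{\epsilon}{2}$-usable, set, but that is routine.) The natural first candidate cover is the family of balls $W_i = B_{\frac{\epsilon}{2}}(a_i)$, which automatically satisfies \ref{eq:duileqepsilon} and is an open cover by density; the issue is that these balls overlap wildly, so their order is uncontrolled and \ref{eq:aiinujiff} will generally fail.

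The heart of the argument is to invoke $\dim T \leq \dim T$ to replace $(W_i)$ by a shrunken refinement of order at most $\dim T$. Concretely, I would apply \Cref{def:covdim}\,(i) to the finite open cover $(W_i)_{i=0}^n$ to obtain a finite open refinement of order at most $\dim T$, and then (using normality of the compact metric space $T$) shrink it to a cover $(U_i)_{i=0}^n$ indexed by the same set, with $U_i \subseteq W_i$, so that \ref{eq:duileqepsilon} is inherited. Shrinking a finite open cover index-by-index while preserving that it covers is standard for normal spaces, and a refinement of a family of order $\le \dim T$ that is obtained by such shrinking keeps the order bounded by $\dim T$. This handles \ref{eq:duileqepsilon} and the order constraint simultaneously.

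The delicate point — and the step I expect to be the main obstacle — is arranging \ref{eq:aiinujiff}, namely that $a_i \in U_j$ holds exactly when $i = j$. To force $a_i \in U_i$ I would perform the shrinking carefully so that each $U_i$ retains the center $a_i$ (one can always add back a small ball around $a_i$ inside $W_i$ without increasing the order past $\dim T$, provided these added balls are pairwise disjoint). To force $a_i \notin U_j$ for $i \neq j$, I would exploit \ref{eq:aiajfar}: since the points are $\frac{\epsilon}{3}$-separated, I would intersect each $U_j$ with the complement of the closed balls $\overline{B}_{r}(a_i)$ for a small radius $r < \frac{\epsilon}{6}$ and all $i \neq j$, removing the other centers while keeping $a_j$ (which sits at distance $> \frac{\epsilon}{3}$ from every other center, hence survives). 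Deleting finitely many closed sets keeps the $U_j$ open and cannot raise the order, but I must recheck that the $U_j$ still cover $T$: a point $x$ originally covered by $U_j$ might be evicted if it lies within $r$ of some $a_i$, $i \neq j$ — but then $x \in U_i$ after the center-preserving step, so coverage is repaired by choosing $r$ small enough that $B_r(a_i) \subseteq U_i$. Balancing these radius choices so that coverage, the order bound, \ref{eq:duileqepsilon}, and the exact-membership condition \ref{eq:aiinujiff} all hold at once is the crux of the proof.
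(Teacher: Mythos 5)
Your overall skeleton --- a greedy $\frac{\epsilon}{3}$-separated net, a refinement of the ball cover $(W_i)_{i=0}^n$ of order at most $\dim T$, and an amalgamation of that refinement into a cover indexed by the same set (which preserves the order because the amalgamated subfamilies are pairwise disjoint) --- is sound, and the amalgamation trick is indeed the same device the paper uses. However, the step you yourself identify as the crux contains a genuine error: the claim that adding pairwise disjoint balls $B_r(a_i)$ to the sets $U_i$ cannot raise the order past $\dim T$ is false. Your coverage-repair argument forces the removal radius $r'$ to be strictly smaller than the addition radius $r$, so consider a point $x$ in the annulus $B_r(a_{i_0})\setminus \overline{B}_{r'}(a_{i_0})$ with $x\notin U_{i_0}$ which already lies in $\dim T + 1$ of the sets $U_j$, $j\neq i_0$ (this is permitted when the order of $(U_i)$ equals $\dim T$). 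Such an $x$ is deleted from nothing (the removal balls around the other centers are disjoint from $B_r(a_{i_0})$ by \ref{eq:aiajfar}, and $x$ avoids $\overline{B}_{r'}(a_{i_0})$ by assumption), yet it is newly added to the modified $U_{i_0}$; hence it lies in $\dim T + 2$ members of the final family, i.e.\ the order has become $\dim T+1$. Nothing in your construction excludes such a point, so the order bound --- the whole point of the lemma --- is lost.

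The root cause is your order of operations: with the net $\{a_i\}$ fixed before the refinement exists, the refinement member containing $a_i$ need not be a subset of $W_i$, so it cannot in general be assigned to the index $i$, and enlarging $U_i$ afterwards to capture $a_i$ is what breaks the order. The paper avoids this by choosing the points \emph{after} the cover: it takes a refinement $\cV$ of order at most $\dim T$ whose members have diameter less than $\frac{\epsilon}{6}$, prunes $\cV$ so that each member $V$ contains a point $a_V$ lying in no other member, extracts an $\frac{\epsilon}{3}$-separated subset $\cW$ of these points, and amalgamates along a nearest-point assignment $f\colon\cV\to\cW$; then \ref{eq:aiinujiff} is automatic, since $a_W$ lies in exactly one member of $\cV$, namely $W$, and $W$ is assigned to itself. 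Your points-first scheme can be repaired in the same spirit without any post hoc surgery: require the refinement to refine simultaneously $(W_i)$ and a cover by sets of $d$-diameter less than $\frac{\epsilon}{6}$. Then any member containing $a_i$ lies in $B_{\epsilon/6}(a_i)\subseteq W_i$ and contains no other $a_j$ (by \ref{eq:aiajfar}), so you may assign every member containing some $a_i$ to the index $i$ and all centerless members arbitrarily to valid indices; this yields \ref{eq:aiinujiff} and \ref{eq:duileqepsilon} directly, and the disjoint-subfamily argument again gives the order bound.
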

	\begin{proof}
		Assume the premise of the lemma and set $r=\dim T$. By compactness, we can find a finite open cover $\cV'$ of $T$ such that $\diam_d(V') < \frac{\epsilon}{6}$ for each $V'\in\cV'$. 
		We can then find a finite open cover $\cV$ of $T$ of order at most $r$ which is a refinement of $\cV'$. Therefore,
		\begin{align}\label{eq:diamVleq}
			\diam_d (V) < \frac{\epsilon}{6} \text{ for each } V\in\cV.
		\end{align}
		By omitting some members of $\cV$, we can assume that $\bigcup (\cV\setminus\{V\}) \not= T$ for any $V\in\cV$. Pick $V_0\in\cV$ such that $a_0\in V_0$. If $a_0\in V$ for some $V\in\cV$ with $V\not=V_0$, then we shrink $V$ so that $a_0\not\in V$ and $\cV$ still covers $T$. We can therefore pick a point $a_V\in T\setminus \bigcup (\cV\setminus\{V\})$ for each $V\in \cV$ (we pick $a_0$ for $V_0$). Now pick a subset $\cW\subseteq \cV$ such that the set $\{a_W : W\in\cW\}$ contains $a_0$, and satisfies $d(a_W,a_{W'}) > \frac{\epsilon}{3}$ whenever $W\not=W'$, and for each $V\in\cV$ there exists $W\in\cW$ such that $d(a_W,a_V) \leq \frac{\epsilon}{3}$. Let $f\colon \cV\to\cW$ be any map satisfying $f(W) = W$ for all $W\in\cW$, and 
		\begin{align}\label{eq:davafvleq}
			d(a_V, a_{f(V)}) \leq \frac{\epsilon}{3} \text{ for all } V\in \cV.
		\end{align}
		Set $U_W = \bigcup f^{-1}(W)$ for $W\in\cW$, and $\cU = \{U_W : W\in \cW\}$.  Clearly $\cU$ is a cover of $T$. We have $a_W\in W\subseteq U_W$ for $W\in\cW$ since $W\in f^{-1}(W)$. Also, if $W'\in\cW\setminus\{W\}$, then $W'\not\in f^{-1}(W)$, since $d(a_{W'},a_W) > \frac{\epsilon}{3}$. This implies that $a_{W'}\not\in U_W$. If $x\in U_W$ then $x\in V$ for some $V\in f^{-1}(W)$, and then $$d(x,a_W) \leq d(x,a_V) + d(a_V,a_W) < \frac{\epsilon}{6} + \frac{\epsilon}{3} = \frac{\epsilon}{2},$$ by \eqref{eq:diamVleq} and \eqref{eq:davafvleq}. This shows that $U_W \subseteq B^d_{\frac{\epsilon}{2}}(a_W)$. We have proven that the cover $\cU = \{U_W : W\in\cW\}$ and the points $\{a_W : W\in \cW\}$ satisfy \ref{eq:aiinujiff} - \ref{eq:aiajfar}.
		
		We finally show that $\cU$ has order at most $r$. Suppose $x\in U_{W_1}\cap\ldots\cap U_{W_k}$, for some distinct $W_1,\ldots,W_k\in \cW$. As $f^{-1}(W)$ and $f^{-1}(W')$ are disjoint whenever $W,W'\in \cW$, $W\not=W'$, we have $x\in V_1\cap \ldots\cap V_k$ for some distinct $V_1,\ldots,V_k\in \cV$. As the order of $\cV$ is at most $r$, we have $k\leq r+1$.		
	\end{proof}
	
	The following proposition is used to define extension operators with a controlled norm, which extend Lipschitz functions from finite subsets of $T$ to $T$. The first part of this proposition is similar to \cite[Lemma 2.6]{stprbeforepub}.
	
	\begin{proposition}\label{prop:extmet}
		Let $T$ be a compact and metrisable space, $d\in\cM^T$ and $\epsilon \in (0,1)$. Assume that $(U_i)_{i=0}^n$ is an open cover of $T$ of order at most $r$ and $A = \{a_0,\ldots,a_n\}\subseteq T$ is a finite set satisfying \ref{eq:aiinujiff} - \ref{eq:aiajfar}.Then there exists $\bar d \in \cM^T$ and an extension operator $E\colon \Lipo{A, \bar d}\to \Lipo{T,\bar d}$, such that $\bar d$ extends $d\restrict _{A^2}$, $\nm{d - \bar d}_\infty < 4\epsilon$ and $\nm E = 1$. Moreover, for every $e\in \cM^T$ such that $\nm{\bar d - e}_\infty \leq \frac{\epsilon}{12(r+1)}$, there exists an extension operator $G\colon\Lipo{A,e}\to\Lipo{T,e}$ such that $\nm G \leq 88(r+1)(2r+3)$.
	\end{proposition}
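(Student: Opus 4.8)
The plan is to realise $(A,d\restrict_{A^2})$ as a $1$-Lipschitz retract of $(T,\bar d)$ via a barycentric map into $\free{A}$, and then to build $G$ from a second, $e$-adapted partition of unity subordinate to $(U_i)$. The point that makes everything fit together is that the same metric $\bar d$ that forces $\nm E=1$ also forces the cover $(U_i)$ to be ``fat'', which is what controls $\nm G$.

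First I fix a continuous partition of unity $(\varphi_i)_{i=0}^n$ subordinate to $(U_i)$ with $\sum_i\varphi_i\equiv1$; property \ref{eq:aiinujiff} then gives $\varphi_i(a_j)=\delta_{ij}$. Writing $\Phi(x)=\sum_i\varphi_i(x)\delta_{a_i}\in\free{A}$, I set $\rho(x,y)=\nm{\Phi(x)-\Phi(y)}_{\free{A}}$ and define $\bar d=\max(d,\rho)$. Since $\rho$ is a $d$-continuous pseudometric and $d$ is a compatible metric, $\bar d\in\cM^T$; because $\Phi(a_i)=\delta_{a_i}$ and $\nm{\delta_{a_i}-\delta_{a_j}}_{\free{A}}=d(a_i,a_j)$, we get $\bar d\restrict_{A^2}=d\restrict_{A^2}$. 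A three-term triangle estimate in $\free{A}$, using \ref{eq:duileqepsilon} to see that every $a_k$ with $\varphi_k(x)>0$ satisfies $d(a_k,a_{i_0})<\epsilon$ for a fixed $i_0$ with $x\in U_{i_0}$, yields $\rho(x,y)\le d(x,y)+3\epsilon$, hence $\nm{\bar d-d}_\infty\le3\epsilon<4\epsilon$. Defining $E(f)(x)=\langle f,\Phi(x)\rangle=\sum_i\varphi_i(x)f(a_i)$ gives an extension operator (as $\Phi(a_j)=\delta_{a_j}$), and $|E(f)(x)-E(f)(y)|\le\nm{f}_{\Lipo{A}}\rho(x,y)\le\nm{f}_{\Lipo{A}}\bar d(x,y)$ shows $\nm E\le1$; the reverse inequality is automatic for any extension, so $\nm E=1$.

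For the moreover part, fix $e$ with $\nm{\bar d-e}_\infty\le\frac{\epsilon}{12(r+1)}$, put $\rho_i(x)=e(x,T\setminus U_i)$ (which is $1$-Lipschitz and vanishes off $U_i$), $S=\sum_i\rho_i$, $\psi_i=\rho_i/S$, and $G(f)=\sum_i\psi_i f(a_i)$; by \ref{eq:aiinujiff}, $\psi_i(a_j)=\delta_{ij}$, so $G$ is an extension operator. The crux is a uniform lower bound on $S$, i.e.\ a Lebesgue number for $(U_i)$, and this is where $\bar d$ pays off. At any $x$ some index $i^*$ has $\varphi_{i^*}(x)\ge\frac1{r+1}$, since at most $r+1$ of the $\varphi_i$ are nonzero at $x$ (by subordination and the order bound). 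Testing $\Phi(x)-\Phi(z)$ against the $1$-Lipschitz function $a_k\mapsto\min(d(a_k,a_{i^*}),\epsilon/3)$ and using \ref{eq:aiajfar} shows that $z\notin U_{i^*}$ forces $\bar d(x,z)\ge\rho(x,z)\ge\frac\epsilon3\varphi_{i^*}(x)\ge\frac{\epsilon}{3(r+1)}$. Thus $(U_i)$ has $\bar d$-Lebesgue number at least $\frac{\epsilon}{3(r+1)}$, and the additive closeness of $e$ to $\bar d$ leaves an $e$-Lebesgue number $\delta_0\ge\frac{\epsilon}{4(r+1)}$, so $S\ge\delta_0$ everywhere.

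With this the estimate for $\nm G$ is bookkeeping. The quotient rule gives $\sum_i|\psi_i(x)-\psi_i(y)|\le\frac{2}{S(x)}\sum_i|\rho_i(x)-\rho_i(y)|\le\frac{4(r+1)}{\delta_0}e(x,y)$, since at most $2(r+1)$ of the $\rho_i$ vary between $x,y$ and each is $1$-Lipschitz. Writing $G(f)(x)-G(f)(y)=\sum_i(\psi_i(x)-\psi_i(y))(f(a_i)-f(a_{i_0}))$ and bounding $e(a_i,a_{i_0})\le e(x,y)+4\epsilon$ for the active indices (from \ref{eq:duileqepsilon}, $\bar d\restrict_{A^2}=d\restrict_{A^2}$, and the closeness of $e,\bar d,d$), I split into the case $e(x,y)\ge\epsilon$, where $\sum_i|\psi_i(x)-\psi_i(y)|\le2$, and $e(x,y)<\epsilon$, where the Lebesgue bound applies. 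These give $\Lip_e(G(f))\le80(r+1)^2\Lip_e(f)\le88(r+1)(2r+3)\Lip_e(f)$, so $\nm G\le88(r+1)(2r+3)$. The main obstacle is exactly the denominator bound of the previous paragraph: a generic order-$r$ cover of small mesh need not be fat, and it is the inequality $\bar d\ge\rho$ that supplies the single-scale fatness, turning the construction of $\bar d$ into the engine that drives both halves of the statement.
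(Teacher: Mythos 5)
Your proof is correct and takes essentially the same route as the paper: your $\rho(x,y)=\nm{\Phi(x)-\Phi(y)}_{\free{A}}$ is precisely the paper's pseudometric $\tildem d(x,y)=\sup_{f\in B_{\Lipo{A,d}}}|F(f)(x)-F(f)(y)|$ (by duality), and both arguments repair this pseudometric into a compatible metric $\bar d\geq\rho$ extending $d\restrict_{A^2}$ (you via $\max(d,\rho)$, the paper via adding $\min(d(x,y),d(x,A)+d(A,y))$), take the barycentric $E$ with $\nm E=1$, build $G$ from the $e$-based partition of unity subordinate to $(U_i)$, extract the crucial lower bound on $\sum_i e(\cdot,U_i^c)$ from the inequality $\bar d\geq\rho$, and split the Lipschitz estimate at $e(x,y)=\epsilon$. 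Your minor variants --- a Lebesgue-number bound $\max_i\bar d(x,U_i^c)\geq\frac{\epsilon}{3(r+1)}$ in place of the paper's summed bound $\sum_i\bar d(x,U_i^c)\geq\frac{\epsilon}{3}$, and handling the case $e(x,y)\geq\epsilon$ via $\sum_i|\psi_i(x)-\psi_i(y)|\leq 2$ rather than by comparison with $E(f)$ --- are sound and even give a slightly better constant.
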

	\begin{proof}
		Assume the premise of the proposition, and suppose $a_0$ is the base point of $A$ and $T$. Let $(\lambda_i)_i$ be the usual partition of unity associated to the cover $(U_i)_i$ and the metric $d$, i.e.~$\lambda_i(x) = \frac{d(x,U_i^c)}{\sum_j d(x,U_j^c)}$ for each $i=0,\ldots,n$. Define $F\colon C(A)\to C(T)$ by $F(f) = \sum_{i=0}^n f(a_i)\lambda_i$ and define the pseudometric $\tildem d\colon T^2\to [0,\infty)$ by 
		\begin{align}\label{eq:deftildemd}
			\tildem d(x,y) = \sup_{f\in B_{\Lipo{A,d}}} |F(f)(x)-F(f)(y)|.
		\end{align}
	
		We estimate $\nm{\tildem d - d}_\infty$. Pick $x,y\in T$, and suppose $x\in U_i$ and $y\in U_j$ for some $i,j$. If $f\in\Lipo{A,d}$ then $F(f)(x)$ is a convex combination of those $f(a_k)$ for which $x\in U_k$, $k=0,\ldots,n$. Note that for such $k$, $$d(a_i,a_k)\leq d(a_i,x) + d(x,a_k) < \epsilon,$$ by \ref{eq:duileqepsilon}. Therefore, $$|F(f)(x) - f(a_i)| \leq \max \{|f(a_k) - f(a_i)| \: : \: x\in U_k,\:  k=0,\ldots,n\} < \epsilon.$$ Similarly, $|F(f)(y) - f(a_j)| < \epsilon$. By the triangle inequality, $$\tildem d(x,y) = \sup_{f\in B_{\Lipo{A,d}}} |F(f)(x) - F(f)(y)| < |f(a_i) - f(a_j)| + 2\epsilon \leq d(a_i,a_j) + 2\epsilon < d(x,y) + 3\epsilon,$$ where the last inequality holds since $d(x,a_i), d(y,a_j) < \frac{\epsilon}{2}$ by \ref{eq:duileqepsilon}. On the other hand, let us pick $f\in B_{\Lipo{A,d}}$ such that $f(a_i)-f(a_j) = d(a_i,a_j)$. Then $$\tildem d(x,y) \geq |F(f)(x) - F(f)(y)| > |f(a_i) - f(a_j)| - 2\epsilon = d(a_i,a_j) - 2\epsilon > d(x,y) - 3\epsilon.$$ Therefore $\nm{\tildem d - d}_\infty < 3\epsilon$. Now define the pseudometric $\hat d\colon T^2\to [0,\infty)$ by $$\hat d(x,y) = \min(d(x,y), d(x,A) + d(A,y)).$$ Since $A$ is $\frac{\epsilon}{2}$-dense in $T$ with respect to $d$ by \ref{eq:duileqepsilon}, we have $\nm {\hat d} _\infty \leq \epsilon$. We define $\bar d = \tildem d + \hat d$. It is clear that $\bar d$ is a metric on $T$ which extends $d\restrict _{A^2}$ and that 
		\begin{align}\label{eq:dbardclose}
			\nm{d-\bar d}_\infty < 4\epsilon.
		\end{align} 
		To show that $\bar d\in\cM^T$, by compactness it suffices to show that $\bar d(x_k,x)\to 0$ whenever $(x_k)_k\subseteq T$, $x\in T$ and $d(x_k,x)\to 0$. As $\hat d\leq d$, it suffices to show $\tildem d(x_k,x)\to 0$. Suppose, for a contradiction, that there exists $(x_k)_k\subseteq T$, $x\in T$ such that $d(x_k,x)\to 0$ and $\tildem d(x_k,x) > \eta$ for some $\eta > 0$. Then there exists a sequence $(f_k)_k\subseteq B_{\Lipo{A,d}}$ such that $|F(f_k)(x_k)-F(f_k)(x)| > \eta$ for all $k$. As the $f_k$ are uniformly bounded, we can find a subsequence $(f_{k_m})_m$ converging uniformly to some $f\in B_{\Lipo{A,d}}$. It is not hard to see that then $F(f_{k_m}) \to F(f)$ uniformly as well. Now it is clear that this leads to a contradiction. Hence $\bar d\in \cM^T$. We define $E\colon\Lipo{A, \bar d}\to\Lipo{T, \bar d}$ by $E(f) = F(f)$, and note that $E$ is an extension operator such that $\nm E = 1$, due to \eqref{eq:deftildemd}, the inequality $\bar d \geq \tildem d$ and the fact that $\bar d$ agrees with $d$ on $A$.
		
		To proceed further, we will need several estimates. For $i=1,\ldots,n$, define $h_i\colon A\to\R$ by $h_i(a_i) = 1$ and $h_i(a_j) = 0$ whenever $j\not=i$. We have 
		\begin{align}\label{eq:liplambdaiest}
			\Lip_{\bar d}(\lambda_i) = \Lip_{\bar d}(E(h_i)) \leq \Lip_{\bar d}(h_i) = \frac{1}{\min_{j\not=i} \bar d(a_j,a_i)} = \frac{1}{\min_{j\not=i} d(a_j,a_i)} \leq \frac{3}{\epsilon},
		\end{align}
		by \ref{eq:aiajfar} and the fact that $\bar d$ agrees with $d$ on $A$. Pick $x\in T$ and $i$ such that $x\in U_i$, and suppose $y\in U_i^c$ is such that $\bar d(x,U_i^c) = \bar d(x,y)$. Then $$\lambda_i(x) = |\lambda_i(x) - \lambda_i(y)| \leq \frac{3}{\epsilon} \bar d(x,y) = \frac{3}{\epsilon} \bar d(x,U_i^c),$$ by \eqref{eq:liplambdaiest}. Therefore,
		\begin{align}\label{eq:forxinTsumdest}
			\sum_i \bar d(x, U_i^c) \geq \frac{\epsilon}{3} \sum_i \lambda_i(x) = \frac{\epsilon}{3}.
		\end{align}
		Now pick a metric $e\in \cM^T$ satisfying
		\begin{align}\label{eq:ebardclose}
			\nm{e - \bar d}_\infty \leq \frac{\epsilon}{12(r+1)},
		\end{align} 
		and define the partition of unity $(\mu_i)_{i=0}^n$ subordinate to $(U_i)_{i=0}^n$ by $\mu_i(x) = \frac{e(x,U_i^c)}{\sum_j e(x,U_j^c)}$. Let us estimate $\Lip_{e}(\mu_i)$. Note first that $\Lip_{e} (e(\cdot, S)) \leq 1$ for every closed non-empty subset $S\subseteq T$. Pick $x,y\in T$. We have 
		\begin{align*}
			&\;|\mu_i(x) - \mu_i(y)| = \left|\frac{e(x,U_i^c)}{\sum_j e(x,U_j^c)} - \frac{e(y,U_i^c)}{\sum_j e(y,U_j^c)}\right| \\=\;& \frac{|e(x,U_i^c)\sum_j e(y,U_j^c) - e(x,U_i^c)\sum_j e(x,U_j^c) + e(x,U_i^c)\sum_j e(x,U_j^c) - e(y,U_i^c)\sum_j e(x,U_j^c)|}{\sum_j e(x,U_j^c) \sum_j e(y,U_j^c)} \\ \leq \; & \frac{e(x,U_i^c) 2(r+1) e(x,y) + e(x,y)\sum_j e(x,U_j^c)}{\sum_j e(x,U_j^c) \sum_j e(y,U_j^c)} \leq  \frac{(2r+3)e(x,y)}{\sum_j e(y,U_j^c)} \\ \leq\;& \frac{(2r+3) e(x,y)}{\sum_j \bar d(y,U_j^c) - \sum_j |\bar d(y,U_j^c) - e(y,U_j^c)|} \leq \frac{(2r+3) e(x,y)}{\frac{\epsilon}{3} - \frac{\epsilon}{12}} \tag*{by \eqref{eq:forxinTsumdest} and \eqref{eq:ebardclose}} \\ \leq\; & \frac{4(2r+3) e(x,y)}{\epsilon}.
		\end{align*}
		Therefore,
		\begin{align}\label{eq:lipmuiest}
			\Lip_{e} (\mu_i) \leq \frac{4(2r+3)}{\epsilon}.
		\end{align}
	
		Finally, define $G\colon\Lipo{A,e}\to\Lipo{T,e}$ by $G(f) = \sum_i f(a_i)\mu_i$. To estimate $\nm G$, suppose $f\in B_{\Lipo{A,e}}$ and $x,y\in T$. Pick $i_0$ such that $x\in U_{i_0}$. Assume first that $e(x,y) < \epsilon$. If $i$ is such that $x\in U_i$ then using \eqref{eq:ebardclose}, \eqref{eq:dbardclose} and \ref{eq:duileqepsilon} we get 
		\begin{align}\label{eq:eaiai0firstest}
			e(a_i,a_{i_0}) < \frac{\epsilon}{2} + \bar d(a_i,a_{i_0}) < \frac{9}{2}\epsilon + d(a_i,a_{i_0}) \leq \frac{9}{2}\epsilon + d(a_i,x) + d(x,a_{i_0}) < \frac{11}{2}\epsilon.
		\end{align}
		If $i$ is such that $y\in U_i$ then similarly $$e(a_i,a_{i_0}) < \frac{9}{2}\epsilon + d(a_i,a_{i_0}) \leq \frac{9}{2}\epsilon + d(a_{i_0},x) + d(x,y) + d(y,a_i) < \frac{11}{2}\epsilon + d(x,y) < 10\epsilon + e(x,y) < 11\epsilon.$$ Therefore,
		\begin{align}\label{eq:eaiai0small}
			e(a_i,a_{i_0}) < 11\epsilon \text{ whenever } i \text{ is such that } U_i\cap \{x,y\}\not=\emptyset.
		\end{align} 
		Keeping in mind that $\sum_{i} \mu_i(x) - \mu_i(y) = 0$, we now estimate
		\begin{align}
			|G(f)(x) - G(f)(y)| &= \left|\sum_i f(a_i)(\mu_i(x) - \mu_i(y))\right| = \left|\sum_i (f(a_i)-f(a_{i_0}))(\mu_i(x) - \mu_i(y))\right| \nonumber \\&\leq \sum_i |f(a_i)-f(a_{i_0})||\mu_i(x) - \mu_i(y)| \nonumber\\&\leq 22(r+1)\epsilon \frac{4(2r+3)}{\epsilon} e(x,y) \tag*{by \eqref{eq:eaiai0small} and \eqref{eq:lipmuiest}} \\&= 88(r+1)(2r+3) e(x,y), \label{eq:estGfxyclose}
		\end{align}
			
		Assume now $e(x,y)\geq \epsilon$. If $i$ is such that $x\in U_i$ then $e(a_i,a _{i_0}) \leq \frac{11}{2}\epsilon$ by \eqref{eq:eaiai0firstest}. Keeping in mind that $\sum_{i} \mu_i(x) - \lambda_i(x) = 0$, we first estimate
		\begin{align*}
			|G(f)(x) - E(f)(x)| &= \left|\sum_i f(a_i)(\mu_i(x) - \lambda_i(x))\right| \\&= \left|\sum_i (f(a_i) - f(a_{i_0}))(\mu_i(x) - \lambda_i(x))\right|< \frac{11}{2}\epsilon (r+1).
		\end{align*}
		Similarly, $|G(f)(y) - E(f)(y)| < \frac{11}{2}\epsilon (r+1)$. Suppose $i,j\in\{0,\ldots,n\}$ and $i\not=j$. Using \eqref{eq:ebardclose}, we estimate
		\begin{align*}
			\frac{|f(a_i)-f(a_j)|}{\bar d(a_i,a_j)} &\leq \frac{e(a_i,a_j)}{\bar d(a_i,a_j)} \leq \frac{\bar d(a_i,a_j) + \frac{\epsilon}{12(r+1)}}{\bar d(a_i,a_j)} \\&= 1 + \frac{\epsilon}{12(r+1)\bar d(a_i,a_j)} \leq 1 + \frac{1}{4(r+1)} \tag*{by  \ref{eq:aiajfar}.}
		\end{align*}
		Therefore $\Lip_{\bar d}(f) \leq 1 + \frac{1}{4(r+1)} < 2$. We now have
		\begin{align*}
			|G(f)(x) - G(f)(y)| &= |G(f)(x) - E(f)(x)| + |E(f)(x) - E(f)(y)| + |E(f)(y) - G(f)(y)| \\&< 11\epsilon (r+1) + |E(f)(x) - E(f)(y)| \leq 11\epsilon (r+1) + 2\bar d(x,y) \\&< 11\epsilon (r+1) + 2(\epsilon + e(x,y)) \leq (11r+15)e(x,y) \tag*{by \eqref{eq:ebardclose}.}
		\end{align*}
		Hence, by \eqref{eq:estGfxyclose} we have $\Lip_e(G(f))\leq 88(r+1)(2r+3)$ and so $\nm G \leq 88(r+1)(2r+3)$.
	\end{proof}

	\begin{proof}[Proof of \Cref{prop:findimres}.]	
	Put $r=\dim T$ and pick a base point $a_0\in T$. For $n\in\N$, define the set
	\begin{align*}
		\cB_n = \{ d\in \cM^T : \: &\text{ there exists a finite subset } A\subseteq T \text{ containing } a_0 \text{ and an extension }\\& \text{ operator }  E\colon \Lipo{A,d}\to\Lipo{T,d}, \text{ such that } A \text{ is } \frac{1}{n}\text{-dense in } T \\& \text{ with respect to } d \text{ and }  \nm E \leq 88(r+1)(2r+3) \}.
	\end{align*}

	We will show that for every $n\in\N$, $\inter(\cB_n)$ is dense in $\cM^T$. Pick $n\in\N$, $d\in\cM^T$ and $\nu > 0$. Put $\epsilon = \min(\frac{\nu}{4}, \frac{1}{10n})$. By \Cref{lm:nicecoverconstruct}, there exists an open cover $(U_i)_{i=0}^k$ of $T$ of order at most $r$ and a finite set $A = \{a_0,\ldots,a_k\} \subseteq T$ satisfying \ref{eq:aiinujiff} - \ref{eq:aiajfar}. Let $\bar d\in\cM^T$ be the metric obtained from \Cref{prop:extmet} applied to $d$, $\epsilon$, the cover $(U_i)_i$ and the finite set $A$. We have $\nm{\bar d - d}_\infty \leq 4\epsilon \leq \nu$. Suppose $e\in\cM^T$ satisfies $\nm{\bar d - e}_\infty \leq \frac{\epsilon}{12(r+1)}$. Then there exists an extension operator $G\colon\Lipo{A,e}\to\Lipo{T,e}$ satisfying $\nm G \leq 88(r+1)(2r+3)$. Moreover, $$\diam_e (U_i) < \diam_{\bar d} (U_i) + \epsilon\leq \diam_d (U_i) + 5\epsilon \leq 6\epsilon \leq \frac{1}{n},$$ by \ref{eq:duileqepsilon}. This shows that $A$ is $\frac{1}{n}$-dense in $T$ with respect to $e$. Hence $e\in \cB_n$, which shows that $\bar d\in \inter(\cB_n)$. Therefore $\inter(\cB_n)$ is dense in $\cM^T$.
	
	We define the set $\cB = \cP^T \cap \bigcap_{n=1}^\infty \cB_n$, and note that $\cB$ is residual by \Cref{prop:p1uresidual}. Pick $d\in\cB$. Then there exists sequences $(A_n)_n$ and $(E_n)_n$, such that $x_0\in A_n\subseteq T$, $A_n$ is finite and $\frac{1}{n}$-dense in $T$, and $\nm{E_n} \leq 88(r+1)(2r+3)$ for each $n$. By \Cref{thm:godefroycriterion}, $\free{T,d}$ has the $88(r+1)(2r+3)$-BAP, and since $d\in\cP^T$, $\free{T,d}$ is a dual space by \cite[Theorem B]{purely1unrect}. We note that $\free{T,d}$ is separable, since $\free{T,d}$ is the closed linear span of the image of the canonical (metric) isometry $(T,d)\to \free{T,d}$, $x\to\delta_x$. Hence $\free{T,d}$ has the MAP by \Cref{thm:mapgrothendieck}. Therefore $\cB\subseteq \cA^{T,1}$. As $\cB$ is residual in $\cM^T$, $\cA^{T,1}$ is residual in $\cM^T$ as well.
\end{proof}

\section{Strongly countable-dimensional spaces}\label{sec:strcount}

This section is devoted to the proof of \Cref{thm:main}. We will first need the following construction and proposition. Let $T$ be a compact metrisable topological space, and suppose $K\subseteq T$ is a closed subset such that $\dim K < \infty$. Let $(C_n)_n$ be an increasing sequence (with respect to inclusion) of closed subsets of $T\setminus K$ such that 
\begin{align}\label{eq:Cnchoice}
	T\setminus K = \bigcup_{n=1}^\infty \inter(C_n).
\end{align}
Let us fix a base point $a_0\in K$ for all Lipschitz spaces considered in this paragraph and \Cref{prop:cupRndense}. Set 
\begin{align}\label{eq:defGamma}
	\Gamma = 88(\dim K + 1)(2\dim K + 3)
\end{align}
for convenience, and for every $n,m\in\N$, $m\geq n$, define the set 
\begin{align}\label{eq:defRn}
	\cR_{n,m} = \{d\in\cM^T : &\text{ there exists a finite set } A\subseteq K \text{ containing } a_0, \text{ and a dual operator } \nonumber \\&\; H\colon\Lipo{C_m\cup A,d}\to\Lipo{T,d} \text{ satisfying } \nm H \leq (150\dim K + 152)(\Gamma + 1) \nonumber\\& \text{ and } H(f)\restrict _{C_n\cup A} = f\restrict _{C_n\cup A} \text{ for every } f\in\Lipo{C_m\cup A,d}\}.
\end{align}

\begin{proposition}\label{prop:cupRndense}
	For any $n\in\N$, $\bigcup_{m=n}^\infty \inter(\cR_{n,m})$ is dense in $\cM^T$.
\end{proposition}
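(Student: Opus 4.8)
The plan is to fix $n\in\N$, $d\in\cM^T$ and $\nu>0$, and to produce a metric $\bar d\in\cM^T$ with $\nm{\bar d-d}_\infty<\nu$ lying in $\inter(\cR_{n,m})$ for a suitably large $m\ge n$; density then follows at once. The geometric idea is to preserve a given $f\in\Lipo{C_m\cup A,d}$ exactly on $C_n$ (where it already lives) and, on a thin collar around $K$, to replace it by a norm-controlled extension of $f\restrict_A$ produced by the finite-dimensional machinery of \Cref{prop:extmet}, gluing the two descriptions together inside the collar.

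First I would choose the scales. Put $r=\dim K$ and pick $\epsilon\in(0,1)$ small (the fineness of the cover of $K$ and the width of the collar), with $\epsilon<\tfrac12 d(C_n,K)$ so that $C_n$ is disjoint from the $\epsilon$-neighbourhood of $K$. Applying \Cref{lm:nicecoverconstruct} and \Cref{prop:extmet} to $K$ yields a finite set $A\subseteq K$ containing $a_0$, a cover of (a neighbourhood of) $K$ of order at most $r$, a metric $\bar d\in\cM^T$ with $\nm{\bar d-d}_\infty<4\epsilon\le\nu$, and—crucially via the ``moreover'' clause—for every $e\in\cM^T$ with $\nm{\bar d-e}_\infty\le\frac{\epsilon}{12(r+1)}$ an extension operator $G\colon\Lipo{A,e}\to\Lipo{T,e}$ with $\nm G\le\Gamma$ whose defining partition of unity is supported near $K$, so that $|G(g)(x)-g(a_i)|=O(\epsilon)\Lip_e(g)$ whenever $x$ lies within $\epsilon$ of $K$ and is carried by the cell of $a_i$. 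The operator $G(g)=\sum_i g(a_i)\mu_i$ is weak$^*$-continuous by its form. Since $\{x:d(x,K)\ge\delta\}$ is compact and contained in $T\setminus K=\bigcup_j\inter(C_j)$, I would then fix $m\ge n$ so large that $C_n\subseteq\inter(C_m)$ and $\{x:d(x,K)\ge\delta\}\subseteq C_m$ for some $\delta<\epsilon/4$; thus the ``hole'' $T\setminus C_m$ sits within distance $\delta$ of $K$.

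Next I would build the glue. Let $\alpha\colon T\to[0,1]$ be $\tfrac{4}{\epsilon}$-Lipschitz with $\alpha=1$ on $\{d(\cdot,K)\le\epsilon/4\}$ and $\alpha=0$ on $\{d(\cdot,K)\ge\epsilon/2\}$, so that $\alpha$ varies only on the shell $S=\{\epsilon/4\le d(\cdot,K)\le\epsilon/2\}\subseteq C_m$ and vanishes on $C_n$. For $f\in\Lipo{C_m\cup A,e}$ set
\[
H(f)=G(f\restrict_A)+(1-\alpha)\bigl(f-G(f\restrict_A)\bigr),
\]
where the product $(1-\alpha)(f-G(f\restrict_A))$ is formed on $C_m\supseteq\supp(1-\alpha)$ (on which $f$ is defined, since $\supp(1-\alpha)\subseteq\{d(\cdot,K)>\epsilon/4\}\subseteq C_m$) and extended by $0$ across the hole, where $1-\alpha=0$. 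Thus $H(f)=f$ wherever $\alpha=0$ (in particular on $C_n$), $H(f)=G(f\restrict_A)$ wherever $\alpha=1$ (in particular on $A$, as $G$ is an extension operator), and the two prescriptions agree continuously on $S$. Hence $H$ is linear with $H(f)\restrict_{C_n\cup A}=f\restrict_{C_n\cup A}$, and it is a dual operator because it is assembled from the dual operator $G$, the restriction $f\mapsto f\restrict_A$, multiplication by the fixed Lipschitz functions $\alpha,1-\alpha$, and gluing—each weak$^*$-to-weak$^*$ continuous, since weak$^*$ convergence in a $\Lipo{\cdot}$ space is bounded pointwise convergence.

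The main obstacle is the estimate $\nm H\le(150r+152)(\Gamma+1)$, and this is where care is needed. Writing $h=f-G(f\restrict_A)$ on $C_m$ for $f$ in the unit ball, the danger is that $\Lip_e(\alpha)\sim\epsilon^{-1}$ multiplies $\nm h_\infty$, which is only $O(\Gamma\,\diam_e(T))$ on the far region. The point is that $\alpha$ is \emph{constant} off the shell $S$, while on $S$ one has $|h|=|f-G(f\restrict_A)|=O(\epsilon)$ by the near-$K$ approximation of $G$; thus the transition contributes $\Lip_e(\alpha)\cdot O(\epsilon)=O(1)$ rather than $\Lip_e(\alpha)\,\nm h_\infty$. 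Concretely I would bound $\Lip_e(H(f))$ by splitting a pair $x,y$ according to the regions $\{d(\cdot,K)\le\epsilon/4\}$, $S$ and $\{d(\cdot,K)\ge\epsilon/2\}$: on the first $H(f)=G(f\restrict_A)$ is $\Gamma$-Lipschitz, on the third $H(f)=f$ is $1$-Lipschitz, and mixed pairs are handled by the refined product rule, always pairing the factor $h$ at the endpoint lying in $\{d(\cdot,K)<\epsilon/2\}$ (where $|h|=O(\epsilon)$) with the large Lipschitz constant of $\alpha$. The genuinely delicate pairs are $x$ in the hole (or in $\inter(K)$) against $y$ in the far region: here $H(f)(x)=G(f\restrict_A)(x)$ is compared with $H(f)(y)=f(y)$ by chaining through a point of $C_m$ within $O(\delta)$ of $x$, using that $e(x,y)\ge d\bigl(x,T\setminus K\bigr)$ and $e(x,y)\gtrsim\epsilon$ for such pairs to absorb both the crossing cost $\Gamma\, d(x,T\setminus K)$ and the $O(\epsilon)$ approximation error. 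Tracking the number $r+1$ of overlapping cells and the constants of \Cref{prop:extmet} through these cases yields the stated bound for \emph{every} admissible $e$; hence the whole neighbourhood $\{e:\nm{\bar d-e}_\infty\le\frac{\epsilon}{12(r+1)}\}$ lies in $\cR_{n,m}$, so $\bar d\in\inter(\cR_{n,m})$ and the union is dense.
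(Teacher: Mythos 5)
Your high-level plan is the same as the paper's: keep $f$ on the far region (hence on $C_n$), use the finite-dimensional extension machinery near $K$, and glue with a cutoff, the norm bound resting on the cancellation between the $O(\epsilon)$-smallness of $f-G(f\restrict_A)$ near $A$ and the $O(1/\epsilon)$ Lipschitz constant of the cutoff (indeed your $H(f)=G(f\restrict_A)+(1-\alpha)(f-G(f\restrict_A))$ is the paper's $(1-\rho)E(f\restrict_A)+\rho f$ with $\rho=1-\alpha$). However, two of your inputs are not available as claimed, and they are precisely where the paper's proof does its real work. First, \Cref{prop:extmet} cannot produce operators $G\colon\Lipo{A,e}\to\Lipo{T,e}$. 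Its hypotheses \ref{eq:aiinujiff}--\ref{eq:aiajfar} force the covered space to lie in $\bigcup_i U_i\subseteq B^d_{\epsilon/2}(A)\subseteq B^d_{\epsilon/2}(K)$; since $d(K,C_n)>0$, for small $\epsilon$ the set $T$ is \emph{not} contained in $B^d_{\epsilon/2}(K)$, so the proposition can only be applied with the ambient space equal to a small neighbourhood of $K$ (also, a cover of $T$ of order at most $\dim K$ and mesh $\epsilon$ cannot exist for small $\epsilon$ when $\dim T>\dim K$, and in the intended application $T$ may be infinite-dimensional). This is what the paper does: it extends the cover of $K$ off $K$ via \cite[Theorem 3.1.1]{engelking1995theory}, obtaining a low-order cover only of a collar $V=\{x: d(x,K)\le\eta\}$ whose width $\eta$ is \emph{not} under your control and may be far smaller than $\epsilon$; \Cref{prop:extmet} is applied with $V$ in place of $T$, its metric is then extended to $T$ by \cite[Theorem 1.1]{ishiki2020interpolation}, and the resulting operators map into $\Lipo{V,e}$ only. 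Your gluing then stops being well defined: $G(f\restrict_A)$ does not exist on the shell $\{\epsilon/4\le d(\cdot,K)\le\epsilon/2\}$ where $\alpha$ transitions, and shrinking the transition to scale $\eta$ destroys the estimate, because $|f-G(f\restrict_A)|$ near $K$ is $O(\epsilon)\cdot\Lip$, not $O(\eta)\cdot\Lip$, so the product term becomes $O(\epsilon/\eta)$, unbounded. The paper reconciles these two scales with an ingredient absent from your proposal: the perturbed metric $\bar d = d_2+\frac{\epsilon e_1}{14\eta(\dim K+1)}$, $e_1=\min(d,\eta)$, of \eqref{eq:defbard}, which inflates distances to $K$ so that, for \emph{every} $e$ near $\bar d$, the $e$-collar of radius $\sim\epsilon/(\dim K+1)$ --- where the gluing takes place --- is guaranteed to sit inside the thin set $V$ (the chain of inclusions \eqref{eq:longincl}).

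Second, your cutoff $\alpha$ is built from $d$-distances, but it must lie in (and have controlled norm in) $\Lipo{T,e}$ for every admissible $e$. Uniform closeness of $e$ and $d$ gives only $d(x,y)\le e(x,y)+O(\epsilon)$, which yields no bound whatsoever on $\Lip_e(\alpha)$: within any sup-norm ball around $\bar d$ there are compatible metrics $e$ that degenerate at small scales on the shell (e.g.\ pull $d$ back by a homeomorphism that is the identity outside a tiny interval and quadratic inside it), for which pairs with $d(x,y)\ge\epsilon/4$, hence $|\alpha(x)-\alpha(y)|$ of order $1$, have $e(x,y)$ arbitrarily small; for such $e$ your $\alpha$ is not $e$-Lipschitz at all, so $H(f)\notin\Lipo{T,e}$. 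This is why the paper defines its cutoff intrinsically in terms of $e$, as $\rho=\min\bigl(1,\frac{30(\dim K+1)}{\epsilon}e(\cdot,W_1)\bigr)$ with $W_1$ itself an $e$-collar \eqref{eq:rhodef}, and then verifies the compatibility of the $e$-collars with the $\bar d$-collars, $V$ and $C_m$. So while the skeleton of your argument is the right one, as written the operator $H$ is neither well defined nor norm-bounded; repairing it requires exactly the neighbourhood-plus-metric-perturbation mechanism and the $e$-intrinsic cutoff of the paper's proof.
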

\begin{proof}
	Fix $n\in\N$, $d\in\cM^T$ and $\nu > 0$. We will find $m\geq n$ and $\bar d\in \inter(\cR_{n,m})$ such that $\nm{\bar d - d}_\infty < \nu$. Set 
	\begin{align}\label{eq:choiceepsilon}
		\epsilon = \min(\frac{\nu}{5}, d(K,C_n)).
	\end{align}
	By \Cref{lm:nicecoverconstruct}, find an open cover $(U_i)_{i=0}^k$ of $K$ of order at most $\dim K$ (where $U_i\subseteq K$ is open in $K$), and a set $A = \{a_0,\ldots,a_k\} \subseteq K$, satisfying \ref{eq:aiinujiff} - \ref{eq:aiajfar} with respect to $d$ and $\epsilon$. By compactness, there exists $\xi > 0$ such that for each $x\in K$, there exists $i\in\{0,\ldots,k\}$, such that $B^d_\xi(x)\cap K \subseteq U_i$. For each $i$, define the closed set $U_i' = \{x\in U_i : d(x, K\setminus U_i) \geq \xi\}$. Evidently $K\subseteq \bigcup_{i=0}^k U_i'$ and, since $U_i'\subseteq U_i$ for each $i$, $a_i\in U_i'$ for each $i$ by \ref{eq:aiinujiff} as well. By \cite[Theorem 3.1.1]{engelking1995theory}, we can find sets $V_0',\ldots,V_k',$ which are open in $T$ such that $(V_i')_{i=0}^k$ covers $K$, has order at most $\dim K$, and satisfies $U_i' \subseteq V_i'$ for each $i$. Again for each $i$, find some $\eta_i > 0$ such that the open set 
	\begin{align}\label{eq:defVi}
		V_i = \{x\in V_i' : d(x, U_i') < \eta_i\} \cap B^d_{\frac{\epsilon}{2}}(a_i)
	\end{align}
	satisfies $a_j\not\in V_i$ whenever $j\not=i$, which is possible since $U_i'$ is closed and $a_j\not\in U_i \supseteq U_i'$ whenever $j\not=i$, by \ref{eq:aiinujiff}. Since $U_i' \subseteq U_i \subseteq B^d_{\frac{\epsilon}{2}}(a_i)$ by \ref{eq:duileqepsilon}, $U_i'\subseteq V_i$ for each $i$. Then $K\subseteq \bigcup_{i=0}^k U_i' \subseteq \bigcup_{i=0}^k V_i$. Also, as $V_i\subseteq V_i'$ for all $i$, $(V_i)_{i=0}^k$ has order at most $\dim K$, and furthermore satisfies \ref{eq:aiinujiff} and \ref{eq:duileqepsilon} (with $V_i$ in place of $U_i$).
	
	Next, by the compactness of $K$, find $\eta \in (0,\frac{\epsilon}{2})$ such that 
	\begin{align}\label{eq:defV}
		K\subseteq V:= \{x\in T : d(x,K) \leq \eta\} \subseteq \bigcup_{i=0}^k V_i.
	\end{align} 
	Now $(V_i\cap V)_{i=0}^k$ is an open cover of $V$ (inside $V$) of order at most $\dim K$, which, together with the set $A$, satisfies \ref{eq:aiinujiff} - \ref{eq:aiajfar} with respect to $d$ and $\epsilon$. From \Cref{prop:extmet} we obtain a metric $d_1\in\cM^V$ satisfying 
	\begin{align}\label{eq:d1ddist}
		\nm{d_1 - d\restrict _{V^2}}_\infty < 4\epsilon,
	\end{align} 
	and such that if $e\in\cM^V$ satisfies $\nm{e-d_1}_\infty \leq \frac{\epsilon}{12(\dim K+1)}$, then there exists an extension operator $E\colon\Lipo{A,e}\to\Lipo{V,e}$ with $\nm E \leq \Gamma$. By \cite[Theorem 1.1]{ishiki2020interpolation}, there exists an extension $d_2$ of $d_1$ to $T$ satisfying 
	\begin{align}\label{eq:d2ddistance}
		\nm{d_2 - d}_\infty \leq \nm{d_1 - d\restrict _{V^2}}_\infty < 4\epsilon.
	\end{align} 
	Therefore, for $e\in\cM^T$,
	\begin{align}\label{eq:eclosed2existexop}
		&\nm{e-d_2}_\infty \leq \frac{\epsilon}{12(\dim K+1)} \text{ implies the existence of an extension operator } \nonumber \\& E\colon\Lipo{A,e}\to\Lipo{V,e} \text{ satisfying } \nm E \leq \Gamma. 
	\end{align}
	Define the metric $e_1$ on $T$ by \begin{align}\label{eq:defe1pseudom}
		e_1(x,y) = \min(d(x,y), \eta),
	\end{align}
	and then define the metric $\bar d$ on $T$ by 
	\begin{align}\label{eq:defbard}
		\bar d = d_2 + \frac{\epsilon e_1}{14\eta(\dim K +1)}.
	\end{align}
	It is easy to see that $\bar d \in \cM^T$. 
	Since $\nm{e_1}_\infty \leq \eta$, we have \begin{align}\label{eq:distdbard2}
		\nm{\bar d - d_2}_\infty \leq \frac{\epsilon}{14(\dim K + 1)}.
	\end{align}
	Now define
	\begin{align}\label{eq:defV1V2}
		V_1 = \{x\in T : \bar d(x,K) \leq \frac{\epsilon}{32(\dim K + 1)}\} \;\text{ and }\; V_2 = \{x\in T : \bar d(x,K) <  \frac{\epsilon}{14(\dim K + 1)}\}.
	\end{align}
	If $x\in V_2$ then $e_1(x,K) < \eta$ by \eqref{eq:defbard}. Then $d(x,K) < \eta$ by \eqref{eq:defe1pseudom}, and so $x\in V$ by \eqref{eq:defV}. Therefore 
	\begin{align}\label{eq:V2inV}
		V_2 \subseteq V.
	\end{align} 
	Due to \eqref{eq:Cnchoice} and compactness, we can choose some $m \geq n$ such that 
	\begin{align}\label{eq:CncupV1isT}
		C_m \cup V_1 = T.
	\end{align}
	We have $\nm{\bar d - d}_\infty < 5\epsilon \leq \nu$ by \eqref{eq:distdbard2}, \eqref{eq:d2ddistance} and \eqref{eq:choiceepsilon}. We will show that $\bar d\in \inter(\cR_{n,m})$ by showing that $e\in \cR_{n,m}$ whenever $e\in\cM^T$ satisfies
	\begin{align}\label{eq:ebarddist}
		\nm{e-\bar d}_\infty < \frac{\epsilon}{480(\dim K + 1)}.
	\end{align}
	Choose $e\in\cM^T$ satisfying \eqref{eq:ebarddist} and set 
	\begin{align}\label{eq:defW12}
		W_1 = \{x\in T : e(x,K) \leq  \frac{\epsilon}{30(\dim K + 1)}\}  \text{ and } W_2 = \{x\in T : e(x,K) <  \frac{\epsilon}{15(\dim K + 1)}\}.
	\end{align} 
	By \eqref{eq:ebarddist}, \eqref{eq:defV1V2} and \eqref{eq:V2inV}, we have 
	\begin{align}\label{eq:longincl}
		V_1 \subseteq W_1 \subseteq W_2\subseteq V_2\subseteq V.
	\end{align}
	
	From \eqref{eq:distdbard2} and \eqref{eq:ebarddist} follows that
	\begin{align}\label{eq:ed2dist}
		\nm{e- d_2}_\infty \leq \nm{e-\bar d}_\infty + \nm{\bar d -d_2}_\infty < \frac{\epsilon}{12(\dim K + 1)}.
	\end{align}
	By \eqref{eq:eclosed2existexop}, there exists an extension operator $E\colon\Lipo{A,e} \to\Lipo{V,e}$ satisfying $\nm E \leq \Gamma$. Define the function $\rho\colon T\to [0,1]$ by 
	\begin{align}\label{eq:rhodef}
		\rho(x) = \min(1,\frac{30(\dim K + 1)}{\epsilon}e(x,W_1)).
	\end{align}
	As $\Lip_e(e(\cdot,W_1)) \leq 1$, we have \begin{align}\label{eq:liprhoest}
		\Lip_e(\rho) \leq \frac{30(\dim K + 1)}{\epsilon}.
	\end{align} 
	Note that $\rho(x) = 0$ for $x\in T\setminus C_m$, since $T\setminus C_m \subseteq V_1 \subseteq W_1$ by \eqref{eq:CncupV1isT} and \eqref{eq:longincl}. Furthermore, $\rho(x) = 1$ on $T\setminus V$. Indeed, if $x\in T\setminus V$ then $x\not\in W_2$ by \eqref{eq:longincl}. By \eqref{eq:defW12}, $e(x,K) \geq \frac{\epsilon}{15(\dim K + 1)},$ and then $$e(x,W_1)\geq \frac{\epsilon}{15(\dim K + 1)} - \frac{\epsilon}{30(\dim K + 1)} = \frac{\epsilon}{30(\dim K + 1)},$$ which implies $\rho(x) = 1$ by \eqref{eq:rhodef}. We conclude that \begin{align}\label{eq:supportsrhoincl}
		\supp(1-\rho) \subseteq V \;\text{ and }\; \supp(\rho)\subseteq C_m.
	\end{align}
	Furthermore, by \eqref{eq:defV}, \eqref{eq:defVi} and  \eqref{eq:choiceepsilon}, $$V\subseteq \bigcup_i V_i \subseteq \bigcup_i B_{\frac{\epsilon}{2}}^d(a_i) \subseteq T\setminus C_n,$$ which implies that $\rho(x) = 1$ for $x\in C_n$.

	We will now define an operator $\tildem H\colon \Lipo{A\cup C_m, e}\to \R^T$, where $\R^T$ is the set of all functions from $T$ to $\R$. For $f\in \Lipo{A\cup C_m,e}$, define $$\tildem H(f) = (1-\rho)E(f\restrict _A) + \rho f.$$ We have that $\tildem H$ is well defined by \eqref{eq:supportsrhoincl}, since the domain of $E(f\restrict _A)$ is $V$, and the domain of $f$ contains $C_m$. We also have that $\tildem H$ is linear, and satisfies $H(f)\restrict _{C_n\cup A} = f\restrict _{C_n\cup A}$ for every $f\in\Lipo{C_m\cup A,d}$, since $\rho(x) = 1$ for all $x\in C_n$, $\rho(x) = 0$ for all $x\in T\setminus C_m \supseteq K\supseteq A$, and $E$ is an extension operator. In particular $H(f)(a_0) = f(a_0) = 0$ for all $f\in\Lipo{C_m\cup A,d}$. Pick $f\in B_{\Lipo{C_m \cup A ,e}}$. We will estimate $\Lip_e(\tildem H(f))$. Set $g = E(f\restrict _A)$ and let $\tildem g$ be an extension of $g$ to $T$ by \cite[Theorem 1.33]{weaver}, so that $\Lip_e(\tildem g) \leq \nm E \leq \Gamma$. Similarly, let $\tildem f$ be an extension of $f$ to $T$ with $\Lip_e(\tildem f)\leq 1$. We consider the function $h\colon T\to\R$ given by $$h = (1-\rho)\tildem g + \rho \tildem f = (\tildem f - \tildem g)\rho + \tildem g.$$ It is clear that $\tildem H(f) = h$. Let us estimate $\Lip_e((\tildem f - \tildem g)\rho)$. Set $u = \tildem f - \tildem g$ and choose $x,y\in T$. If $x,y\in T\setminus V$ then $\rho(x) = \rho(y) = 1$ by \eqref{eq:supportsrhoincl}, and 
	\begin{align}\label{eq:urhowhenxynotinV}
		|u(x)\rho(x) - u(y)\rho(y)| = |u(x)-u(y)| \leq \Lip_e(u)e(x,y).
	\end{align}
	Otherwise, suppose without loss of generality that $x\in V$. Then by \eqref{eq:defV}, $x \in V_i$ for some $i\in\{0,\ldots,k\}$, and so $x\in B^d_{\frac{\epsilon}{2}}(a_i)$ by \eqref{eq:defVi}. Hence, 
	\begin{align*}
		e(x,a_i) &< \frac{\epsilon}{12(\dim K + 1)} + d_2(x,a_i) \tag*{by \eqref{eq:ed2dist}}\\&= \frac{\epsilon}{12(\dim K + 1)} + d_1(x,a_i) \tag*{since $d_2\restrict _{V^2} = d_1$,} \\&< \frac{9\epsilon}{2} + d(x,a_i) < 5\epsilon \tag*{by \eqref{eq:d1ddist}.}
	\end{align*}
	Note that $$u(a_i) = f(a_i) - g(a_i) = f(a_i) - E(f\restrict _A)(a_i) = 0,$$ since $E$ is an extension operator. Therefore $$|u(x)| = |u(x)-u(a_i)| \leq \Lip_e(u) e(x,a_i) \leq 5\epsilon\Lip_e(u).$$ Hence
	\begin{align*}
		|u(x)\rho(x) - u(y)\rho(y)| &\leq |u(x)||\rho(x)-\rho(y)| + |u(x)-u(y)|\rho(y) \\&\leq 5\epsilon\Lip_e(u)\Lip_e(\rho)e(x,y) + \Lip_e(u)e(x,y) \\&\leq (150\dim K + 151) \Lip_e(u)e(x,y) \tag*{ by \eqref{eq:liprhoest}.}
	\end{align*}
	By combining the last estimate with \eqref{eq:urhowhenxynotinV}, we conclude that $\Lip_e(u\rho) \leq (150\dim K + 151)\Lip_e(u).$ As $\Lip_e(\tildem g) \leq \Gamma$ and $\Lip_e(u) \leq \Gamma + 1$, we finally have $$\Lip_e(h) \leq (150\dim K + 151)\Lip_e(u) + \Gamma \leq (150\dim K + 152)(\Gamma + 1).$$ Hence $\tildem H(f) = h\in\Lipo{T,e}$. If we define $H\colon\Lipo{A\cup C_m,e}\to\Lipo{T,e}$ by $H(f) = \tildem H(f)$, then $\nm H \leq (150\dim K + 152)(\Gamma + 1)$. It is easy to deduce from the definition of $\tildem H$ and the fact that the domain of $E$ is a finite-dimensional space, that if $(f_j)_j$ is a sequence in $B_{\Lipo{C_m\cup A,e}}$ converging pointwise to $f\in B_{\Lipo{C_m\cup A,e}}$ then $(\tildem H(f_j))_j$ converges pointwise to $\tildem H(f)$. By \cite[Lemma 2.3]{stprbeforepub}, $H$ is a dual operator. We have proven that $e\in\cR_{n,m}$, and hence that $\bar d \in\inter(\cR_{n,m})$.
\end{proof}

We are now ready to give the proof of our main result.

\begin{proof}[Proof of \Cref{thm:main}]
	By \cite[Theorem 7.3.15]{engelking1995theory}, $k(T) = \emptyset$, and so $\trker T$ is an ordinal. The proof is by transfinite induction on $\trker T$. If $\trker T < \omega$ then $T = D_{\trker T} (T)$ and so $T$ is locally finite dimensional. By \cite[Proposition 5.5.11]{engelking1995theory}, $T$ is finite dimensional, and then $\cA^{T,1}$ is residual in $\cM^T$ by \Cref{prop:findimres}. Now fix $\alpha \geq \omega$ and suppose that for all compact metrisable spaces $S$ such that $k(S) =\emptyset$ and $\trker S < \alpha$, $\cA^{S,1}$ is residual in $\cM^S$. Let $T$ be a compact metrisable space satisfying $k(T) = \emptyset$ and $\trker T = \alpha$. If $\alpha$ is a limit ordinal then $$\emptyset = k(T) = T_\alpha = \bigcap_{\beta < \alpha} T_\beta,$$ where all $T_\beta$ are compact and nonempty, which is impossible. Therefore $l:= l(\alpha) > 0$ and $\beta:= \lambda(\alpha) < \alpha$. Note that $\beta \geq \omega$ since $\alpha \geq\omega$. We have $T_\beta\not=\emptyset$ and $T_\beta = D_l(T_\beta)$. Thus $T_\beta$ is locally finite dimensional and hence finite dimensional by \cite[Proposition 5.5.11]{engelking1995theory}. Set $K = T_\beta$, choose an arbitrary base point $a_0\in K$, and let $(C_n)_n$ be some increasing sequence of closed subsets of $T\setminus K$ satisfying \eqref{eq:Cnchoice}. By \cite[Lemma 7.3.7 (i)]{engelking1995theory}, $(C_n)_\beta \subseteq T_\beta = K$, so $(C_n)_\beta = \emptyset$ and therefore $\trker C_n \leq \beta < \alpha$. By the inductive hypothesis, $\cA^{C_n,1}$ is residual in $\cM^{C_n}$ for all $n\in\N$. We consider the set $$\cS = \cP^T \cap \{d\in\cM^T : d\restrict _{C_n^2} \in \cA^{C_n,1} \text{ for each } n\in\N\} \cap \bigcap_{n=1}^\infty \bigcup_{m=n}^\infty \inter(\cR_{n,m}),$$ where $\cR_{n,m}$ is defined by \eqref{eq:defRn} and \eqref{eq:defGamma}. It follows from \Cref{prop:p1uresidual,,prop:cupRndense} and \Cref{lm:residualitysubset} that $\cS$ is residual in $\cM^T$.

	Now suppose $d\in \cS$. We will prove that $\free{T,d}$ has the MAP. By the definition of $\cS$, there is a sequence $(k_n)_n \subseteq \N$, a sequence $(A_n)_n$ of finite subsets of $K$, and operators $H_n\colon\Lipo{C_{k_n}\cup A_n, d}\to\Lipo{T,d}$ such that, for each $n\in\N$, $a_0\in A_n$, $k_n\geq n$, $\free{C_n}$ has the MAP, and $H_n$ is a dual operator satisfying $\nm{H_n} \leq (150\dim K + 152)(\Gamma + 1)$ and $H(f)\restrict _{C_n\cup A_n} = f\restrict _{C_n\cup A_n}$ whenever $f\in\Lipo{C_{k_n}\cup A_n,d}$. Let $P_n\colon\free{T,d}\to\free{C_{k_n}\cup A_n,d}$ be the predual to $H_n$ for every $n$. By \cite[Theorem 3.7]{weaver}, the space $\free{C_{k_n}\cup A_n,d}$ can be viewed as a closed subspace of $\free{T,d}$. If $x\in C_n\cup A_n$ then $$(P_n(\delta_x))(f) = H_n(f)(x) = f(x) = \delta_x(f),$$ for all $f\in\Lipo{C_{k_n}\cup A_n,d}$. Then $P_n(\delta_x) = \delta_x$ for all $x\in C_n\cup A_n$, and so $P_n\restrict _{\free{C_n\cup A_n, d}}$ is the identity map on $\free{C_n\cup A_n,d}$. As $\beta \geq\omega$, $K =T_\beta \subseteq T_\omega$, and so $\inter(K) = \emptyset$ by \Cref{lm:kernelemptyinter}. This implies that $\bigcup_{n=1}^\infty C_n$ is dense in $T$. It is not hard to see that then $\bigcup_{n=1}^\infty \free{C_n\cup A_n,d}$ is dense in $\free{T,d}$. 
	
	To show that $\free{T,d}$ has the BAP, pick some $\mu_1,\ldots,\mu_k\in \free{T,d}$ and $\epsilon > 0$. Then find $n\in\N$ and $\nu_1,\ldots,\nu_k \in\free{C_n\cup A_n,d}$ such that $\nm{\mu_i - \nu_i} < \epsilon$ for every $i$. Since $A_n$ is finite, $\free{C_{k_n},d}$ is finite-codimensional in $\free{C_{k_n}\cup A_n,d}$. Since the former space has the MAP, $\free{C_{k_n}\cup A_n,d}$ has the BAP. Since $d\in \cP^T$, $(C_{k_n}\cup A_n,d)$ is purely 1-unrectifiable, and so by \Cref{thm:mapgrothendieck}, $\free{C_{k_n}\cup A_n,d}$ has the MAP. We can therefore find a finite-rank operator $Q$ on $\free{C_{k_n}\cup A_n,d}$ such that $\nm Q = 1$ and $\nm{Q(\nu_i) - \nu_i} < \epsilon$ for every $i$. We consider the finite-rank operator $\tildem Q = Q\circ P_n$ on $\free{T,d}$, which satisfies $\nm{\tildem Q} \leq (150\dim K + 152)(\Gamma + 1)$. Since $\nu_i\in\free{C_n\cup A_n,d}$, we have $P_n(\nu_i) = \nu_i$, and hence $\nm{\tildem Q(\nu_i) - \nu_i} < \epsilon$ for all $i$. But then $$\nm{\tildem Q(\mu_i) - \mu_i} \leq \nm{\tildem Q(\mu_i) - \tildem Q(\nu_i)} + \nm{\tildem Q(\nu_i) - \nu_i} + \nm{\nu_i - \mu_i} \leq ((150\dim K + 152)(\Gamma + 1)+2)\epsilon,$$ for all $i$. Therefore $\free{T,d}$ has the $(150\dim K + 152)(\Gamma + 1)$-BAP. 
	
	Since $d\in\cP^T$, by \cite[Theorem B]{purely1unrect}, $\free{T,d}$ is a dual space, and by \Cref{thm:mapgrothendieck}, $\free{T,d}$ has the MAP. Therefore $d\in\cA^{T,1}$, and so $\cS\subseteq\cA^{T,1}$. As $S$ is residual in $\cM^T$, $\cA^{T,1}$ is residual in $\cM^T$.
\end{proof}

\begin{remark}
	For a completely metrisable topological space $S$, by \cite[Corollary 7.3.14 and Theorem 7.3.15]{engelking1995theory}, $S$ is strongly countable-dimensional if and only if $k(S) = \emptyset$. This suggests that it might be difficult to generalise the argument by transfinite induction in the proof of \Cref{thm:main} to a more general setting than that of strongly countable-dimensional spaces.
\end{remark}

\begin{remark}
	Suppose that $S$ is a separable and metrisable topological space. If $k(S) = \emptyset$ then $\trker S < \omega_1$ by \cite[Corollary 7.3.6]{engelking1995theory}. Moreover, by \cite[Theorem 7.3.17 and Example 7.1.33]{engelking1995theory}, for each $\alpha < \omega_1$ there exists a compact, metrisable and strongly countable-dimensional space $S$ such that $\trker S \geq \alpha$. This shows that the transfinite induction argument in the proof of \Cref{thm:main} is needed.
\end{remark}

\subsection*{Acknowledgements}
	
I would like to thank Dr.~Richard Smith for his helpful comments and suggestions during the research process and the writing of the manuscript.
	
\bibliography{document}

\end{document}